\documentclass[11pt,a4paper,reqno]{amsart}
\usepackage[a4paper,margin=28mm]{geometry}
\usepackage[T1]{fontenc}
\usepackage{lmodern}
\usepackage{amsmath,amssymb,amsthm}
\usepackage{microtype}
\usepackage{needspace}
\usepackage[colorlinks=true,linkcolor=blue,citecolor=blue,urlcolor=blue]{hyperref}
\newtheorem{maintheorem}{Theorem}

\newtheorem{theorem}{Theorem}[section]
\newtheorem{lemma}[theorem]{Lemma}
\newtheorem{proposition}[theorem]{Proposition}
\numberwithin{equation}{section}
\newcommand{\N}{\mathbb N}

\newcommand{\K}{\mathbb K}
\newcommand{\E}{\mathcal E}
\newcommand{\ind}{\mathbf 1}
\newcommand{\norm}[1]{\lVert #1\rVert}
\newcommand{\abs}[1]{\lvert #1\rvert}
\newcommand{\sot}{\xrightarrow{\mathrm{SOT}}}
\DeclareMathOperator{\Var}{Var}
\allowdisplaybreaks[1]
\hypersetup{pdftitle={On complemented subspaces of L1[0,1]},pdfauthor={Antonio Acuaviva},pdfsubject={Schur and non-Schur complemented subspaces of L1}}

\subjclass[2020]{Primary 46B20; Secondary 46B03, 46B42, 46E30, 47B38}
\keywords{Complemented subspace, Banach lattice, Schur property, Dunford--Pettis operator, Radon--Nikod\'ym property}

\begin{document}
\title[{On complemented subspaces of $L_1[0,1]$}]{On complemented subspaces of $L_1[0,1]$}
\author[A. Acuaviva]{Antonio Acuaviva}
\address[A.~Acuaviva]{School of Mathematical Sciences, Lancaster University, Lancaster LA1 4YX, United Kingdom}
\email{ahacua@gmail.com}
\date{13 September 2026}
\begin{abstract}
We construct two complemented subspaces of $L_1[0,1]$. The first has the Schur property but fails the Radon--Nikod\'ym property. The second contains a copy of $\ell_2$ but no copy of $L_1[0,1]$. Neither space is isomorphic to a Banach lattice. This gives a negative answer to the complemented-subspace question of Lindenstrauss and Rosenthal. A Lean 4 formalisation of the main results accompanies the paper.
\end{abstract}
\maketitle

\begin{center}
\emph{Companion Lean~4 formalisation:} \url{https://complemented-subspaces-of-l1.github.io/}
\end{center}

\tableofcontents

\section{Introduction}

Understanding the complemented subspaces of classical Banach spaces, and classifying them up to isomorphism, is one of the central problems in the geometry of Banach spaces. For the classical sequence spaces, the corresponding classification has a particularly simple answer. Pe\l czy\'nski~\cite{Pelczynski1960} proved that every infinite-dimensional complemented subspace of $c_0$ is isomorphic to $c_0$, and that every infinite-dimensional complemented subspace of $\ell_p$, $1\le p<\infty$, is isomorphic to $\ell_p$. His decomposition method became a basic tool in the study of complemented subspaces. Lindenstrauss~\cite{Lindenstrauss1967} subsequently established the corresponding result for $\ell_\infty$.

For $1<p<\infty$, $p\ne2$, the familiar complemented subspaces of $L_p[0,1]$ already include the five mutually nonisomorphic spaces~\cite{BRS1981}
\begin{equation*}
\ell_p,\qquad \ell_2,\qquad \ell_p\oplus\ell_2,\qquad \left(\bigoplus_{n=1}^{\infty}\ell_2\right)_{\ell_p},\qquad L_p[0,1].
\end{equation*}
Rosenthal~\cite{Rosenthal1970} showed that these examples do not exhaust the possibilities. For $2<p<\infty$, he constructed new complemented spaces $X_p$ generated by suitable independent mean-zero random variables. Schechtman~\cite{Schechtman1975} then obtained infinitely many isomorphism types. Bourgain, Rosenthal and Schechtman~\cite{BRS1981} developed these constructions further, using an ordinal index to obtain uncountably many pairwise nonisomorphic complemented subspaces of $L_p[0,1]$ for every $1<p<\infty$, $p\ne2$. Their work shows how far the reflexive Lebesgue spaces are from the classification of the sequence spaces, although a full classification of their complemented subspaces remains unavailable.

The spaces $L_1[0,1]$ and $C[0,1]$ remain the most prominent examples of classical Banach spaces whose complemented subspaces are not yet fully understood. For $L_1[0,1]$, the question of Lindenstrauss and Rosenthal~\cite{LR} asks whether every infinite-dimensional complemented subspace is isomorphic to either $\ell_1$ or $L_1[0,1]$. This is the separable case of Pe\l czy\'nski's question~\cite{Pelczynski1960} about complemented subspaces of $L_1$-spaces. Popov~\cite{Popov} surveys the problem and its partial solutions.

The metric version has a complete answer. Over the real field, Grothendieck's theorem~\cite[Proposition~1 and Theorem~1]{Grothendieck1955} shows that the range of a contractive projection on an $L_1$-space is linearly isometric to an $L_1$-space. Douglas~\cite[Theorem~3]{Douglas1965} proved the converse for subspaces of $L_1[0,1]$: every subspace linearly isometric to an $L_1$-space is the range of a contractive projection. The characterization holds over both scalar fields; see also~\cite[Theorem~4.1]{BernauLacey1974}. If the projection is also positive, its range is a closed sublattice~\cite[Theorem~2]{Douglas1965}.

Positivity alone, without a norm-one assumption, is enough for the isomorphic classification. The range of a positive projection on a Banach lattice admits an equivalent Banach lattice norm~\cite[Proposition~III.11.5]{Schaefer1974}. A complemented subspace of an $L_1$-space that is isomorphic to a Banach lattice is necessarily isomorphic to an $L_1$-space \cite[Corollary~2.3]{DeHeviaLattices}. Consequently, every infinite-dimensional range of a positive projection on $L_1[0,1]$ is isomorphic to $\ell_1$ or $L_1[0,1]$.

Pe\l czy\'nski's work~\cite{Pelczynski1960} implies that every infinite-dimensional complemented subspace of $L_1[0,1]$ contains a copy of $\ell_1$ complemented in $L_1[0,1]$. Lindenstrauss and Pe\l czy\'nski~\cite{LP1968} proved that such a subspace is isomorphic to $\ell_1$ if it has an unconditional basis, and Lewis and Stegall~\cite{LS} reached the same conclusion under the Radon--Nikod\'ym property. In the other direction, Enflo and Starbird~\cite{ES1979} proved that a complemented subspace containing a copy of $L_1[0,1]$ must itself be isomorphic to $L_1[0,1]$. Thus a counterexample must lie outside both of these established parts of the classification.

Popov~\cite{Popov} separates the problem into two questions: whether every infinite-dimensional complemented Schur subspace of $L_1[0,1]$ is isomorphic to $\ell_1$, and whether every complemented subspace without the Schur property is isomorphic to $L_1[0,1]$. The constructions below give negative answers to both questions. The first has the Schur property and fails the Radon--Nikod\'ym property, so it also shows why the latter hypothesis in the Lewis--Stegall theorem cannot be replaced by the Schur property.

\begin{maintheorem}\label{thm:main}
There is a Dunford--Pettis projection $P$ on $L_1[0,1]$ with $\norm{P}\le4+3\sqrt2$. Its range has the Schur property but fails the Radon--Nikod\'ym property. In particular, it is not isomorphic to any Banach lattice.
\end{maintheorem}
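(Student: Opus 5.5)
The plan is to build $P$ as a kernel operator adapted to a dyadic tree, so that three properties hold simultaneously. First, $P$ is bounded with the stated constant. Second, $P$ is Dunford--Pettis. Third, the range $X=P(L_1[0,1])$ carries a bounded $\delta$-separated dyadic tree (a bounded non-convergent $X$-valued martingale). Once these are in place, the soft parts of the theorem follow formally:
\begin{itemize}
\item \emph{Schur property.} If $(x_n)\subset X$ is weakly null, then $x_n=Px_n$, and a Dunford--Pettis operator sends weakly null sequences to norm-null ones, so $\norm{x_n}\to0$.
\item \emph{Failure of the Radon--Nikod\'ym property.} This is read off from the separated bounded tree, which is an $X$-valued dyadic martingale that is bounded in norm but does not converge.
\item \emph{No lattice structure.} If $X$ were isomorphic to a Banach lattice, then by \cite[Corollary~2.3]{DeHeviaLattices} it would be isomorphic to a separable $L_1$-space. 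Such a space is $\ell_1$, $L_1[0,1]$, or a finite-dimensional space. $L_1[0,1]$ fails Schur, and the other options have the Radon--Nikod\'ym property, a contradiction.
\end{itemize}
So all the work is in the construction.

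For the construction I would fix the dyadic intervals $I\in\mathcal D$ and attach to each $I$ a ``fresh'' measurable piece $\Omega_I\subset[0,1]$. The pieces are pairwise disjoint with $\sum_I\abs{\Omega_I}\le1$. On each $\Omega_I$ I place a normalized function $h_I$ built from independent Rademacher-type variables living on $\Omega_I$. The aim is that the tree relation $h_I\approx\tfrac12(h_{I0}+h_{I1})$ holds modulo a term that $P$ kills, while $\norm{h_{I0}-h_{I1}}\ge\delta$. I would then define $P=E+R$, where:
\begin{itemize}
\item $E$ is the conditional expectation onto the $\sigma$-algebra generated by the blocks, which has norm $1$;
\item $R$ transfers mass between a parent block and its two children, using averages against the Rademacher functions.
\end{itemize}
The norm of $R$ is controlled by Khintchine's inequality in $L_1$, whose optimal constant is $\sqrt2$. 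This is where a bound of the shape $4+3\sqrt2$ should come from: a few conditional expectations contribute the integer part, and three Khintchine-type transfers contribute $3\sqrt2$. Verifying $P^2=P$ should reduce to the orthogonality relations between the blocks and the Rademacher functions.

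For the Dunford--Pettis property I would use the characterisation that an operator on $L_1$ is Dunford--Pettis exactly when it maps relatively weakly compact (equi-integrable) sets to relatively norm-compact sets. The plan is to show that $P$ restricted to each block factors through a finite-dimensional space, and that the tails are uniformly small on equi-integrable sets. The second point holds because the blocks at depth $n$ have measure tending to $0$, and the transfer part averages against ever more independent variables. The Rademacher averages convert weak nullity into norm nullity via a Khintchine/law-of-large-numbers estimate.

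I expect the main obstacle to be reconciling the Dunford--Pettis property with the $\delta$-separated tree. Schur-type smoothing wants $P$ to destroy oscillation, whereas non-RNP requires the tree differences $P h_{I0}-P h_{I1}$ to keep norm at least $\delta$ uniformly in depth. The separation must therefore be carried by ``spread-out'' directions that are not weakly null; sequences along branches of the tree converge weak$^*$ only in the bidual. My first attempt would be to make the separated directions come from the conditional-expectation part $E$, which is not weakly sequentially continuous to norm on its own. I would then check that the transfer part $R$ exactly cancels the weakly null components, so that every weakly null sequence in $X$ is forced through the Khintchine averaging.
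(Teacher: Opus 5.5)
Your formal reductions are sound. Schur from a Dunford--Pettis projection and the lattice obstruction via de Hevia et al.\ are the paper's steps, and the tree criterion is a valid substitute for the paper's nonrepresentability argument. But the construction, which is the whole content of the theorem, is missing. Worse, the one concrete mechanism you give for the Dunford--Pettis property defeats itself.

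Suppose $P$ restricted to each $L_1(\Omega_I)$ factors through a finite-dimensional space, and likewise on the leftover set, where your $E$ has rank one. Then $P\colon L_1[0,1]\to X$ is \emph{representable}. Each finite-rank piece has a kernel $b_I\colon\Omega_I\to X$ with $\norm{b_I}_\infty\le\norm{P}$, and gluing these gives a bounded strongly measurable kernel for $P$. By the Lewis--Stegall factorization, a representable operator on $L_1$ factors through $\ell_1$. Hence every $T\colon L_1[0,1]\to X$, written as $T=P(\iota T)$ with $\iota\colon X\hookrightarrow L_1[0,1]$, is representable, so $X$ has the Radon--Nikod\'ym property. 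That is exactly what you must avoid. Your fallback of letting $E$ carry the separated directions fails for the same reason. Conditional expectation onto the atomic $\sigma$-algebra generated by disjoint blocks has range isometric to $\ell_1$, so it is itself representable and Dunford--Pettis, and it cannot carry a bounded $\delta$-separated tree. The bound $4+3\sqrt2$ is also only reverse-engineered from the $L_1$ Khintchine constant; nothing in your plan produces it.

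The missing idea is an operator on $L_1$ that is Dunford--Pettis but \emph{not} representable, built into the projection as a map into its range. The paper uses convolution on $G=\{-1,1\}^{\N}$ by the resolvent measures $\nu_t$ of the biased-sign semigroup with $\lambda_j=\sqrt{\log(j+1)}\to\infty$. The Walsh multiplier $(1+t\lambda_F)^{-1}$ makes $R_t$ compact on $L_2$. Its restriction to $L_\infty$ is therefore compact, so it is Dunford--Pettis on $L_1$. A Borel--Cantelli argument gives a common Haar-null carrier, so $\nu_t$ is singular and $R_t$ is nonrepresentable by Cost\'e's theorem. The space with norm $\norm{f}_1+\sum_n2^{n/2}\norm{(I-S_n)f}_1$ is complemented in $\ell_1(L_1)$ by blocks $2^{(i-j)/2}A_iB_j$. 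Their column sums are at most $(1+2r)/(1-r)=4+3\sqrt2$ with $r=2^{-1/2}$, which is where the constant comes from. Liu's band theorem assembles the Dunford--Pettis blocks into a Dunford--Pettis $P$. Failure of the Radon--Nikod\'ym property then follows from nonrepresentability of $R_1\colon L_1(G,m)\to\mathcal{CS}$, the restriction of the retraction to the zeroth coordinate, with no explicit tree needed.
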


Changing the parameters in the same construction gives the non-Schur example. Its proof is given in Section~\ref{sec:non-schur}.

\begin{maintheorem}\label{thm:non-schur}
There is a projection $Q$ on $L_1[0,1]$, with $\norm{Q}\le4+3\sqrt2$, whose range contains an isomorphic copy of $\ell_2$ but no isomorphic copy of $L_1[0,1]$. In particular, its range does not have the Schur property and is not isomorphic to any Banach lattice.
\end{maintheorem}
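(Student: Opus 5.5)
The plan is to rerun the construction behind Theorem~\ref{thm:main} with the parameters changed so that the off-diagonal averaging blocks are no longer summable. Concretely, $[0,1]$ is split into disjoint dyadic pieces $I_n$. On each $I_n$ one places a finite-dimensional block $Q_n$ built from conditional expectations and averaging over independent sign-type functions. In Theorem~\ref{thm:main} the block weights are taken to decay fast enough that $P$ becomes Dunford--Pettis. Here I will choose them so that, inside the range, the averaged vectors $x_n=Q e_n$ of suitable normalized $L_1$-functions $e_n$ behave like independent symmetric random variables with comparable $L_2$ and $L_1$ norms. The first step is to check that the norm estimate $\norm{Q}\le 4+3\sqrt2$ goes through unchanged. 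I expect it to, because that estimate should only use the structure of the blocks (positivity of the expectation parts and a Khintchine-type bound with constant $\sqrt2$ for the sign parts), not the size of the weights.

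The second step produces $\ell_2$ inside $Q(L_1)$. I will choose the $x_n$ to be (up to small perturbation) independent, mean-zero and identically distributed, with uniformly bounded $L_2/L_1$ ratio, e.g.\ Rademacher-like on the relevant pieces. Khintchine's inequality in $L_1$ (constant $1/\sqrt2$) then gives
\begin{equation*}
\tfrac{1}{\sqrt2}\Bigl(\sum_n\abs{a_n}^2\Bigr)^{1/2}\le\Bigl\lVert\sum_n a_nx_n\Bigr\rVert_1\le\Bigl(\sum_n\abs{a_n}^2\Bigr)^{1/2},
\end{equation*}
so $\overline{\operatorname{span}}\{x_n\}\cong\ell_2$. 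A perturbation argument handles the error between $x_n$ and exact Rademachers coming from the other block components. Since $\ell_2$ is not Schur, the range fails the Schur property.

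The third step, which I expect to be the main obstacle, is to show that $Q(L_1)$ contains no copy of $L_1[0,1]$. I would argue by contradiction. If it did, Enflo--Starbird~\cite{ES1979} would make $Q(L_1)$ isomorphic to $L_1[0,1]$. The real task is then to show that $Q$ is narrow, i.e.\ $\inf\norm{Q(f)}_1=0$ over sign functions $f$ supported on any fixed measurable set. I would prove this block by block. Mean-zero $\pm1$ functions that are fine relative to the dyadic partition of $I_n$ are almost annihilated by the conditional-expectation parts. The averaging over independent signs contributes only an $\ell_2$-type, hence small in $L_1$, portion once the signs are taken fine enough; this is where the new parameters must still give decay, though only in this weak sense. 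Given narrowness, I invoke the theorem (Kadec--Popov, Rosenthal) that a narrow operator on $L_1$ fixes no copy of $L_1$. This contradicts $Q$ being the identity on a subspace of its range isomorphic to $L_1$.

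Finally, non-isomorphism to a Banach lattice follows as in Theorem~\ref{thm:main}. A complemented subspace of $L_1$ isomorphic to a lattice is isomorphic to $\ell_1$ or $L_1[0,1]$~\cite{DeHeviaLattices}. The first is excluded because $\ell_1$ is Schur and the range is not. The second is excluded by the previous step.
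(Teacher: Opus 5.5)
There is a genuine gap: you never actually define $Q$, and some of the features you ascribe to it are incompatible with each other.

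\textbf{The construction and the norm bound.} The construction behind Theorem~\ref{thm:main} is not a family of finite-dimensional blocks on dyadic pieces. It is $Q=JK$ on $\E=\ell_1(\N_0;L_1(G,m))$, with blocks $2^{(i-j)/2}A_iB_j$ built from the resolvents $R_t$ of the biased-sign convolution semigroup. For Theorem~\ref{thm:non-schur} one simply takes the constant bias $\lambda_j=1$.

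The bound $4+3\sqrt2$ is the column-sum estimate~\eqref{eq:column}, namely $(1+2r)/(1-r)$ with $r=2^{-1/2}$. It comes from the weights $2^{n/2}$ and the resolvent identity~\eqref{eq:resolvent}. Khintchine plays no role, and these block estimates are identical for both choices of $\lambda$. Making the off-diagonal blocks ``no longer summable'' would therefore destroy exactly the estimate you want to keep. On the other hand, if $Q$ were essentially $\bigoplus_nQ_n$ with finite-rank $Q_n$ on disjoint pieces $I_n$, its range would be an $\ell_1$-sum of finite-dimensional spaces, hence Schur and free of $\ell_2$.

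What really changes between the two theorems is the spectrum of the resolvents. With $\lambda_j=1$, $R_t$ acts as the scalar $(1+t)^{-1}$ on the whole first Walsh chaos. Hence $\norm{f}_{\mathcal{RB}}=\gamma\norm{f}_1$ for Rademacher sums, and Khintchine gives $\ell_2$ exactly, with no perturbation argument. Your Step~2 and your lattice argument are fine once such a $Q$ is available.

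\textbf{Step~3: the missing smoothing estimate.} The claim that fine mean-zero signs are almost annihilated is false in your own setup. Your $x_n$ are meant to be nearly Rademacher, hence mean-zero, nearly $\pm1$-valued and increasingly fine, and they lie in the range, so $Qx_n=x_n$. In the paper likewise $\norm{R_tr_j}_1=(1+t)^{-1}$ for every $j$. So narrowness has to be witnessed by well-chosen signs together with a quantitative smoothing estimate, and that estimate is absent.

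To see why smoothing is needed, take a random-sign combination $\sum_k\varepsilon_ky_k$ of fine mean-zero pieces of $A$. Khintchine gives $\mathbb E\norm{T\sum_k\varepsilon_ky_k}_1\approx\norm{(\sum_k\abs{Ty_k}^2)^{1/2}}_1$, which for $T=I$ equals $m(A)$. Smallness therefore requires passing to $L_2$.

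The paper obtains this from Bonami's hypercontractivity, $\norm{C_uf}_2\le\norm{f}_p$ with $p=1+e^{-2u}<2$. This gives $(\sum_k\norm{T\ind_{E_{n,k}}}_2^2)^{1/2}\le c\,m(E_0)^{1/2}\delta_n^{1/p-1/2}\to0$ on every bush (Lemma~\ref{lem:smoothing-non-enflo}). Even then, $R_t$ itself is unbounded from $L_p$ to $L_2$ because of the part of the integral near $u=0$. So one truncates to $u\ge\varepsilon$ and uses that the non-Enflo class is norm closed.

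\textbf{From blocks to $Q$.} Working ``block by block'' does not reach $Q$. The truncations $H_NQH_N$ converge to $Q$ only strongly, not in operator norm. A property of the individual blocks therefore passes to $Q$ only through a band theorem: here, Liu's theorem that the non-Enflo operators form a band, combined with Lemma~\ref{lem:block-band}. If you want to work with narrowness instead, you must first identify narrow with non-Enflo operators on $L_1$ to obtain such a band result.
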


Both constructions are based on product measures on the countable sign group, a setting with antecedents in Rosenthal's biased-coin construction~\cite{Rosenthal1976}; see also~\cite[Section~2.4]{Popov}. In the Schur example, we use singular convolution measures whose operators are Dunford--Pettis, a combination already present in work of Cost\'e and Uhl. Schachermayer~\cite[Introduction and Section~3]{Schachermayer1986} discusses these antecedents and constructs singular measures whose convolution operators are compact on $L_2$. This compactness is also the ingredient we use to establish the Dunford--Pettis property of the blocks of our projection.

With suitable choices of the sequence $\lambda=(\lambda_j)_{j\in\N}$, the general construction in Section~\ref{sec:general-construction} also yields continuum many pairwise nonisomorphic complemented subspaces of $L_1[0,1]$ and $2^{\mathfrak c}$ norm-closed two-sided ideals in $\mathcal L(L_1[0,1])$, where $\mathfrak c=2^{\aleph_0}$. Details of this construction will appear in a subsequent version.

Section~\ref{sec:general-construction} begins with probability distributions of infinitely many independent biased signs. These form a convolution semigroup; see Lemma~\ref{lem:biased-signs}. We take their exponential averages, called resolvent measures; Proposition~\ref{prop:resolvents} gives the identities satisfied by the associated convolution operators. We then define a function space by summing weighted errors between a function and its averaged versions. Lemma~\ref{lem:intrinsic} establishes its basic properties, and Proposition~\ref{prop:projection} realises it as a complemented subspace of an $L_1$-space.

For the Schur example, two properties of these averages play distinct roles. By Lemma~\ref{lem:carrier}, all the distributions give full measure to the same Haar-null set, so their exponential averages remain singular. Proposition~\ref{prop:compact-resolvents} shows that the associated convolution operators are nevertheless compact on $L_2$. This compactness yields the Schur property through Liu's band theorem~\cite[Lemma~2.17]{Liu1998}, as shown in Proposition~\ref{prop:schur}. Singularity obstructs the Radon--Nikod\'ym property through Cost\'e's nonrepresentability result in Lemma~\ref{lem:singular}, as explained in the proof of Theorem~\ref{thm:main}. The lattice obstruction follows from the structural result of de Hevia et al.~\cite[Corollary~2.3]{DeHeviaLattices}, which is also used for Theorem~\ref{thm:non-schur}.

In Section~\ref{sec:non-schur}, we use the same bias in every coordinate. The general projection applies, and the coordinate functions span a copy of $\ell_2$. Bonami's estimate, the Enflo--Starbird criterion and Liu's second band theorem~\cite[Lemma~2.14]{Liu1998} exclude copies of $L_1$ from the range, proving Theorem~\ref{thm:non-schur}. Section~\ref{subsec:interpolation} gives an interpolation interpretation of both constructions.

\section{Preliminaries}

We write $\N=\{1,2,\ldots\}$ and $\N_0=\{0,1,2,\ldots\}$. All Banach spaces are over a fixed field $\K\in\{\mathbb R,\mathbb C\}$, and an operator means a bounded linear map. For a Banach space $Z$, $\mathcal L(Z)$ denotes the Banach space of operators $T\colon Z\to Z$, equipped with the operator norm. The symbols $B_Z$ and $S_Z$ denote the closed unit ball and unit sphere of $Z$, respectively. The identity operator on $Z$ is denoted by $I_Z$, or simply $I$ when the space is clear. The measures underlying $L_p$-spaces are completed; products and convolutions are formed from Borel measures.

We use standard Banach lattice terminology; see Schaefer~\cite{Schaefer1974} and Meyer-Nieberg~\cite{MeyerNieberg1991}. We recall a few notions used below. A real Banach lattice $Z$ is \emph{order complete} if every nonempty subset that is bounded above has a supremum. An \emph{order ideal} in $Z$ is a vector subspace $I\subset Z$ such that $x\in I$ and $\abs{y}\le\abs{x}$ imply $y\in I$. A \emph{band} is an order ideal closed under suprema of increasing nets of its positive elements, whenever these suprema exist in $Z$. An operator $T$ between real Banach lattices is \emph{positive} if $Tx\ge0$ whenever $x\ge0$, and $S\le T$ means that $T-S$ is positive. For a real $L_1$-space $Z$, this order makes $\mathcal L(Z)$ an order-complete Banach lattice; see~\cite[p.~5]{Liu1998}.

An operator is \emph{Dunford--Pettis} if it sends weakly null sequences to norm-null sequences. A Banach space has the \emph{Schur property} if every weakly null sequence in it is norm null.

\Needspace{7\baselineskip} Let $(S,\eta)$ be a probability space and let $Z$ be a Banach space. An operator $T\colon L_1(S,\eta)\to Z$ is \emph{representable} if there is an essentially bounded, strongly measurable function $b\colon S\to Z$ with
\begin{equation*}
Tf=\int_S f(s)b(s)\,d\eta(s)\qquad(f\in L_1(S,\eta)),
\end{equation*}
where the integral is a Bochner integral. Postcomposition with an operator preserves representability.

We will obtain the Dunford--Pettis property by checking compactness on bounded functions. We use the standard compact-restriction criterion in the following form; see~\cite[Fact~1.1]{Girardi}.

\begin{lemma}\label{lem:dp}
Let $(S,\eta)$ be a standard nonatomic probability space. If $T\colon L_1(S,\eta)\to Z$ has compact restriction to $L_\infty(S,\eta)$, then $T$ is Dunford--Pettis.
\end{lemma}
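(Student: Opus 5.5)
We argue by contradiction, using the facts that weakly null sequences are uniformly integrable and that uniformly integrable sets are almost bounded. Let $(f_n)$ be weakly null in $L_1(S,\eta)$, and suppose, after passing to a subsequence, that $\norm{Tf_n}\ge\varepsilon>0$ for all $n$. We aim to split each $f_n$ into a bounded part, which $T$ treats compactly, and a remainder of uniformly small norm.

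\emph{Step 1 (truncation).} The set $\{f_n\}$ is relatively weakly compact, so by the Dunford--Pettis theorem it is uniformly integrable. Consequently, for every $\delta>0$ there is $M$ with $\sup_n\norm{f_n-f_n\ind_{\{\abs{f_n}\le M\}}}_1<\delta$. Write $g_n=f_n\ind_{\{\abs{f_n}\le M\}}$. Then $(g_n)\subset M B_{L_\infty}$, and $\norm{T(f_n-g_n)}\le\norm{T}\delta$.

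\emph{Step 2 (compactness on the bounded part).} By hypothesis $T$ maps $MB_{L_\infty}$ into a relatively norm-compact set. We may therefore pass to a subsequence along which $Tg_n$ converges in norm to some $z\in Z$. Since $\norm{T(f_n-g_n)}\le\norm{T}\delta$ and $Tf_n\to0$ weakly, every weak cluster value of $Tg_n$ lies within $\norm{T}\delta$ of $0$. It follows that $\norm{z}\le\norm{T}\delta$, using weak lower semicontinuity of the norm. Hence $\limsup\norm{Tf_n}\le 2\norm{T}\delta$ along this subsequence.

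\emph{Step 3 (conclusion).} Take $\delta<\varepsilon/(2\norm{T})$ at the outset. Step 2 then produces a subsequence along which $\limsup\norm{Tf_n}<\varepsilon$, contradicting $\norm{Tf_n}\ge\varepsilon$. The same argument applies to every subsequence, so in fact $\norm{Tf_n}\to0$, and $T$ is Dunford--Pettis.

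The only genuinely nontrivial input is the uniform integrability of weakly null sequences in Step 1. The rest is bookkeeping, the main care being to pass to subsequences in the correct order: fix $\delta$ and $M$ first, then extract the convergent subsequence. The assumptions that $(S,\eta)$ is standard and nonatomic are not needed for this argument.
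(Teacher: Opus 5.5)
Your proof is correct. The paper does not prove this lemma; it invokes the standard compact-restriction criterion and cites Girardi's Fact~1.1. Your proposal therefore supplies the argument behind that citation rather than an alternative to something in the text.

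What your route buys is a self-contained proof that shows exactly what is used. Uniform integrability of weakly null sequences (via the Dunford--Pettis theorem) gives $\{f_n\}\subset MB_{L_\infty}+\delta B_{L_1}$, and everything else is the compactness hypothesis. So you are right that only finiteness of $\eta$ matters, and standardness and nonatomicity are superfluous for this direction. What the citation buys is brevity, together with the converse: a Dunford--Pettis $T$ is compact on $L_\infty$, because $B_{L_\infty}$ is relatively weakly compact in $L_1$. The paper does not need the converse.

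You can also remove the bookkeeping of contradiction and nested subsequences. The inclusion above gives $T(\{f_n\})\subset\overline{T(MB_{L_\infty})}+\norm{T}\delta B_Z$ for every $\delta>0$, so $\{Tf_n\}$ is totally bounded. A relatively norm-compact weakly null sequence is norm null, since every subsequence has a norm-convergent further subsequence whose limit must be the weak limit $0$.
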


\Needspace{8\baselineskip} We work on the compact group $G=\{-1,1\}^{\N}$, with coordinatewise multiplication and Haar probability measure $m$. For a finite scalar Borel measure $\tau$ on $G$, the \emph{convolution operator} $T_\tau\colon L_1(G,m)\to L_1(G,m)$ is defined by
\begin{equation*}
T_\tau f(x)=(f*\tau)(x)=\int_G f(xy^{-1})\,d\tau(y), \qquad f\in L_1(G,m).
\end{equation*}
When $\tau$ is a probability measure, $T_\tau f$ averages translates of $f$ according to $\tau$. Haar invariance and Fubini's theorem make this definition independent of the Borel representative of $f$, and $\norm{T_\tau}\le\norm{\tau}$, where the latter norm is total variation. For finite $F\subset\N$, the \emph{Walsh character} $\chi_F(x)=\prod_{j\in F}x_j$ satisfies $T_\tau\chi_F=\widehat\tau(F)\chi_F$, where $\widehat\tau(F)=\int_G\chi_F\,d\tau$. Finite linear combinations of Walsh characters are called \emph{Walsh polynomials}. They are dense in $L_p(G,m)$ for $1\le p<\infty$, and the Walsh characters form an orthonormal basis of $L_2(G,m)$.

Cost\'e's theorem says that a scalar convolution operator on $L_1$ of a compact metrizable abelian group is representable if and only if its measure is absolutely continuous with respect to Haar measure; see~\cite[Theorem~1.1(c)--(d)]{RobderaSaab1998}. We use the following consequence.

\begin{lemma}\label{lem:singular}
If $\tau$ is a nonzero finite scalar Borel measure on $G$, singular with respect to $m$, then $T_\tau\colon L_1(G,m)\to L_1(G,m)$ is not representable.
\end{lemma}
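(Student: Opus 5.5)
The plan is to reduce the statement directly to Cost\'e's theorem as quoted above (\cite[Theorem~1.1(c)--(d)]{RobderaSaab1998}). First I check its hypotheses. The group $G=\{-1,1\}^{\N}$ is compact, abelian and metrizable: it is a countable product of finite groups, and the product topology is induced, for instance, by $d(x,y)=\sum_j 2^{-j}\abs{x_j-y_j}$. Its Haar probability measure is $m$. The operator $T_\tau$ is exactly the scalar convolution operator $f\mapsto f*\tau$ on $L_1(G,m)$ considered in that theorem. Cost\'e's theorem then says that $T_\tau$ is representable if and only if $\tau\ll m$.

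Next, I argue by contradiction. Suppose $T_\tau$ were representable. Then $\tau\ll m$. By hypothesis we also have $\tau\perp m$, so there is a Borel set $N$ with $m(N)=0$ and $\abs{\tau}(G\setminus N)=0$. Absolute continuity gives $\abs{\tau}(N)=0$, since $\tau\ll m$ implies $\abs\tau\ll m$. Hence $\abs{\tau}(G)=0$, that is, $\tau=0$. This contradicts the assumption that $\tau\ne0$, so $T_\tau$ is not representable.

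The only point I expect to need care with is that the notion of representability used in the cited theorem matches the one defined above: an essentially bounded, strongly measurable kernel $b\colon G\to L_1(G,m)$ with $T_\tau f=\int f\,b\,dm$. This is the standard Riesz-representable notion, so the match is a matter of reading the reference. A second minor point is the complex case: if the cited statement is phrased for real measures, I would split $\tau=\Re\tau+i\Im\tau$. Both parts are singular, and at least one of them is nonzero. I would apply the real case to that nonzero part. The obstacle is that representability of $T_\tau$ need not obviously pass to $T_{\Re\tau}$. If it does not, I would use the fact that Cost\'e's result holds verbatim for complex measures, since the proof via Fourier coefficients or Radon--Nikod\'ym derivatives of the kernel is field-independent. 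Beyond this, the argument is immediate.
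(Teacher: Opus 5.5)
Your proposal is correct and matches the paper, which states the lemma without a separate proof as an immediate consequence of Cost\'e's theorem: representability of $T_\tau$ would force $\tau\ll m$, and together with $\tau\perp m$ this gives $\tau=0$. Your worry about complex scalars is unnecessary, since the paper quotes the cited theorem for scalar measures over either field.
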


A family $(\mu_u)_{u\ge0}$ of probability measures on $G$ is a \emph{convolution semigroup} if $\mu_0=\delta_{\mathbf1}$ and $\mu_u*\mu_v=\mu_{u+v}$ for $u,v\ge0$. We assume that $u\mapsto\mu_u(D)$ is measurable for every Borel set $D$. The associated operator semigroup $(C_u)_{u\ge0}$, where $C_u=T_{\mu_u}$, is assumed to be \emph{strongly continuous} on $L_1(G,m)$, meaning that $C_uf\to f$ in $L_1(G,m)$ as $u\downarrow0$ for every $f\in L_1(G,m)$. For $t>0$, the \emph{normalised resolvent measure} and its convolution operator are
\begin{equation}\label{eq:nu}
\nu_t(D)=\int_0^\infty\mu_u(D)t^{-1}e^{-u/t}\,du, \qquad R_t=T_{\nu_t}.
\end{equation}
Thus, for $f\in L_1(G,m)$,
\begin{equation*}
R_tf=\int_0^\infty t^{-1}e^{-u/t}C_uf\,du,
\end{equation*}
where the latter integral is a Bochner integral. The exponential weight has integral one, so $\nu_t$ is a probability measure and $R_t$ is a positive contraction on every $L_p(G,m)$. If $-A$ denotes the generator of $(C_u)_{u\ge0}$ on $L_1(G,m)$, then
\begin{equation*}
R_t=t^{-1}(t^{-1}I+A)^{-1}=(I+tA)^{-1}.
\end{equation*}
The factor $t^{-1}$ accounts for the normalisation of the usual resolvent at $t^{-1}$. We refer to $R_t$ simply as a resolvent below.

\section{The general construction}\label{sec:general-construction}

Fix a sequence $\lambda=(\lambda_j)_{j\in\N}$ of nonnegative real numbers. We use this sequence to construct probability distributions of infinitely many independent biased signs. We shall specify $\lambda$ in Sections~\ref{sec:schur} and~\ref{sec:non-schur} to obtain the Schur and non-Schur examples, respectively. For finite $F\subset\N$, put $\lambda_F=\sum_{j\in F}\lambda_j$. For $u\ge0$, set
\begin{equation}\label{eq:product-signs}
p_j(u)=\frac{1+e^{-u\lambda_j}}2\quad(j\in\N),\qquad \mu_u=\bigotimes_{j\in\N} \bigl(p_j(u)\delta_1+(1-p_j(u))\delta_{-1}\bigr).
\end{equation}
Each factor is a probability measure on $\{-1,1\}$, so the countable product-measure theorem gives a unique Borel probability measure $\mu_u$ on $G$. Under $\mu_u$, the coordinates are independent, and the $j$th sign equals $1$ with probability $p_j(u)$ and $-1$ with probability $1-p_j(u)$. In particular, its mean is
\begin{equation}\label{eq:bias}
\int_G x_j\,d\mu_u(x)=2p_j(u)-1=e^{-u\lambda_j} \qquad(j\in\N,\ u\ge0).
\end{equation}

A \emph{cylinder set} in $G$ is a set whose membership depends on only finitely many coordinates. For finite $F\subset\N$ and $\varepsilon=(\varepsilon_j)_{j\in F}\in\{-1,1\}^F$, the \emph{basic cylinder}
\begin{equation*}
D(F,\varepsilon)=\{x\in G:x_j=\varepsilon_j\text{ for every }j\in F\}
\end{equation*}
prescribes the signs on $F$ and leaves all other coordinates unrestricted. Every cylinder set is a finite disjoint union of basic cylinders with a common coordinate set. For each $u\ge0$ and cylinder set $D$, the number $\mu_u(D)$ is called the \emph{cylinder probability} of $D$ under $\mu_u$. For a basic cylinder, independence gives
\begin{equation}\label{eq:cylinder-probability}
\mu_u\bigl(D(F,\varepsilon)\bigr) =\prod_{j\in F}\frac{1+\varepsilon_j e^{-u\lambda_j}}2.
\end{equation}
The cylinder sets form an algebra generating the Borel $\sigma$-algebra of $G$.

A family $(\mu_u)_{u\ge0}$ of Borel probability measures on $G$ is a \emph{probability kernel} if $u\mapsto\mu_u(D)$ is Borel measurable on $[0,\infty)$ for every Borel set $D\subset G$.

\begin{lemma}\label{lem:biased-signs}
The measures in~\eqref{eq:product-signs} and the operators $C_u=T_{\mu_u}$ have the following properties.
\begin{enumerate}
\renewcommand{\theenumi}{\roman{enumi}}
\renewcommand{\labelenumi}{(\theenumi)}
\item\label{item:biased-kernel}
For every cylinder set $D$, the map $u\mapsto\mu_u(D)$ is continuous on $[0,\infty)$, and $(\mu_u)_{u\ge0}$ is a probability kernel.
\item\label{item:biased-semigroup}
We have $\mu_0=\delta_{\mathbf1}$ and
\begin{equation*}
\mu_u*\mu_v=\mu_{u+v}\qquad(u,v\ge0).
\end{equation*}
\item\label{item:biased-contractions}
Each $C_u$ is a positive contraction on $L_p(G,m)$ for $1\le p\le\infty$.
\item\label{item:biased-multiplier}
For finite $F\subset\N$,
\begin{equation*}
C_u\chi_F=e^{-u\lambda_F}\chi_F.
\end{equation*}
\item\label{item:biased-continuity}
The semigroup $(C_u)_{u\ge0}$ is strongly continuous on $L_p(G,m)$ for every $1\le p<\infty$.
\end{enumerate}
\end{lemma}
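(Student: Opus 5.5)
The plan is to work with Fourier--Walsh coefficients throughout. For a finite Borel measure on $G$, the values $\widehat\tau(F)$ determine $\tau$, since Walsh polynomials are uniformly dense in $C(G)$ by Stone--Weierstrass. For (\ref{item:biased-kernel}), formula~\eqref{eq:cylinder-probability} shows that $u\mapsto\mu_u(D)$ is a finite product of continuous functions for a basic cylinder. A general cylinder set is a finite disjoint union of basic cylinders, so continuity follows there too. To extend measurability to all Borel sets, I will use a monotone class argument. The class $\mathcal M$ of Borel $D$ for which $u\mapsto\mu_u(D)$ is Borel measurable contains the cylinder algebra. It is closed under increasing unions, since pointwise limits of measurable functions are measurable, and under complements, since $\mu_u(G\setminus D)=1-\mu_u(D)$. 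By the monotone class theorem (or Dynkin's $\pi$--$\lambda$ theorem), $\mathcal M$ contains the generated $\sigma$-algebra, which is the Borel $\sigma$-algebra.

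For (\ref{item:biased-semigroup}), $p_j(0)=1$ for every $j$, so $\mu_0$ is the product of point masses at $1$, namely $\delta_{\mathbf1}$. For the semigroup law, independence gives
\begin{equation*}
\widehat{\mu_u}(F)=\prod_{j\in F}\int x_j\,d\mu_u=e^{-u\lambda_F}
\end{equation*}
by~\eqref{eq:bias}. Since $\chi_F(xy)=\chi_F(x)\chi_F(y)$, we have $\widehat{\mu_u*\mu_v}(F)=\widehat{\mu_u}(F)\widehat{\mu_v}(F)=e^{-(u+v)\lambda_F}=\widehat{\mu_{u+v}}(F)$, and uniqueness of coefficients gives equality of the measures.

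For (\ref{item:biased-contractions}), $C_uf$ is an average of translates $f(\cdot\,y^{-1})$ with respect to a probability measure, so positivity is immediate. For the contraction bound, Minkowski's integral inequality combined with Haar invariance of $m$ gives $\norm{C_uf}_p\le\int\norm{f(\cdot\,y^{-1})}_p\,d\mu_u(y)=\norm{f}_p$. For $p=\infty$ this is trivial. Item (\ref{item:biased-multiplier}) then follows from the general identity $T_\tau\chi_F=\widehat\tau(F)\chi_F$ together with the coefficient computed above.

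For (\ref{item:biased-continuity}), the main step is density. For a Walsh polynomial $g$, item (\ref{item:biased-multiplier}) gives $\norm{C_ug-g}_p\le\sum_F\abs{c_F}\,\abs{e^{-u\lambda_F}-1}\to0$ as $u\downarrow0$, because the sum is finite. For general $f\in L_p$, I will use the uniform bound $\norm{C_u}\le1$ from (\ref{item:biased-contractions}) and an $\varepsilon/3$ argument with a Walsh polynomial $g$ satisfying $\norm{f-g}_p<\varepsilon$. This yields $C_uf\to f$, and the semigroup property turns continuity at $0$ into strong continuity on $[0,\infty)$. The only delicate point in the whole lemma is the extension of measurability in (\ref{item:biased-kernel}) from cylinders to all Borel sets, and I expect the monotone class argument to handle it directly.
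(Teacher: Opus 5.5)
Your proposal is correct and follows the paper's proof essentially item by item: the same cylinder-formula continuity with a monotone-class/Dynkin extension, averaging of translates for contractivity, and Walsh-polynomial density for strong continuity. The one minor variation is in part (ii), where you identify $\mu_u*\mu_v$ with $\mu_{u+v}$ through uniqueness of Walsh coefficients via Stone--Weierstrass; the paper instead multiplies independent random elements $X,Y$ of $G$ and notes that $XY$ has independent sign coordinates with means $e^{-(u+v)\lambda_j}$. The two arguments are interchangeable here.
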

\begin{proof}
For~(\ref{item:biased-kernel}), the right-hand side of~\eqref{eq:cylinder-probability} is a finite product of continuous functions of $u$. Finite disjoint unions therefore give continuity of $u\mapsto\mu_u(D)$ for every cylinder set $D$. Let $\mathcal D$ be the collection of Borel sets $D\subset G$ for which $u\mapsto\mu_u(D)$ is Borel measurable. This collection contains the cylinder sets and is closed under complements, since $\mu_u(G\setminus D)=1-\mu_u(D)$. If $(D_n)_{n\in\N}$ is a pairwise disjoint sequence in $\mathcal D$, then
\begin{equation*}
\mu_u\left(\bigcup_{n\in\N}D_n\right) =\sum_{n=1}^{\infty}\mu_u(D_n)
\end{equation*}
is measurable as a pointwise limit of measurable partial sums. Thus $\mathcal D$ is a Dynkin system containing the cylinder algebra. The $\pi$--$\lambda$ theorem gives $\mathcal D=\mathcal B(G)$, proving the probability-kernel property.

For~(\ref{item:biased-semigroup}), at $u=0$ every coordinate is $1$ almost surely, so $\mu_0=\delta_{\mathbf1}$. Take independent random elements $X=(X_j)_{j\in\N}$ and $Y=(Y_j)_{j\in\N}$ of $G$, with probability distributions $\mu_u$ and $\mu_v$, respectively. Their coordinatewise product $XY$ has probability distribution $\mu_u*\mu_v$. Its coordinates are independent, and
\begin{equation*}
\mathbb E[X_jY_j]=\mathbb E[X_j]\mathbb E[Y_j] =e^{-u\lambda_j}e^{-v\lambda_j} =e^{-(u+v)\lambda_j}\qquad(j\in\N).
\end{equation*}
The probability distribution of a sign is determined by its mean, so $XY$ has probability distribution $\mu_{u+v}$.

For~(\ref{item:biased-contractions}), positivity of $C_u$ follows from its definition as an average of translates. For $1\le p<\infty$, Jensen's inequality and Haar invariance give
\begin{equation*}
\norm{C_uf}_p^p \le\int_G\int_G|f(xy^{-1})|^p\,d\mu_u(y)\,dm(x) =\norm{f}_p^p.
\end{equation*}
For $p=\infty$, the same averaging formula gives $\norm{C_uf}_\infty\le\norm{f}_\infty$.

For~(\ref{item:biased-multiplier}), independence and~\eqref{eq:bias} give
\begin{equation*}
C_u\chi_F(x) =\chi_F(x)\int_G\prod_{j\in F}y_j\,d\mu_u(y) =\chi_F(x)\prod_{j\in F}e^{-u\lambda_j}.
\end{equation*}

For~(\ref{item:biased-continuity}), part~(\ref{item:biased-multiplier}) gives $C_uw\to w$ in $L_p(G,m)$ as $u\downarrow0$ for each Walsh polynomial $w$ and $1\le p<\infty$. For arbitrary $f\in L_p(G,m)$, the contractivity in~(\ref{item:biased-contractions}) gives
\begin{equation*}
\norm{C_uf-f}_p \le 2\norm{f-w}_p+\norm{C_uw-w}_p.
\end{equation*}
Choose $w$ so that the first term is arbitrarily small, and then let $u\downarrow0$. Density of Walsh polynomials proves convergence for every $f$. Parts~(\ref{item:biased-semigroup}) and~(\ref{item:biased-contractions}) extend continuity from zero to every $u\ge0$.
\end{proof}

Let $\nu_t$ and $R_t$ be the resolvent measures and operators defined in~\eqref{eq:nu}, and write $S_n=R_{2^{-n}}$ for $n\in\N_0$. The following properties will be used in the construction of the projection.

\begin{proposition}\label{prop:resolvents}
For every finite $F\subset\N$ and $t>0$,
\begin{equation}\label{eq:resolvent-multiplier}
R_t\chi_F=(1+t\lambda_F)^{-1}\chi_F.
\end{equation}
The operators $R_t$ commute, and $R_t\sot I$ on $L_1(G,m)$ as $t\downarrow0$. For $s,t>0$,
\begin{equation}\label{eq:resolvent}
(I-R_s)R_t=\frac{s}{t}R_s(I-R_t),\qquad \norm{(I-R_s)R_t}\le2\min\{1,s/t\}.
\end{equation}
Moreover,
\begin{equation*}
S_{n-1}(I-S_n)=S_n-S_{n-1}\qquad(n\ge1).
\end{equation*}
\end{proposition}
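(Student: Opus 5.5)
The plan is to verify everything on Walsh characters, where all the operators act diagonally, and then pass to all of $L_1(G,m)$ by density and boundedness. This works because every operator involved is a convolution operator of norm at most $2$.

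\emph{Step 1: the multiplier formula.} The formula~\eqref{eq:resolvent-multiplier} is obtained by integrating Lemma~\ref{lem:biased-signs}(\ref{item:biased-multiplier}) against the exponential weight. From~\eqref{eq:nu} and Fubini (legitimate since $\chi_F$ is bounded and $(\mu_u)$ is a probability kernel),
\begin{equation*}
\widehat{\nu_t}(F)=\int_0^\infty t^{-1}e^{-u/t}\widehat{\mu_u}(F)\,du=\int_0^\infty t^{-1}e^{-u(t^{-1}+\lambda_F)}\,du=(1+t\lambda_F)^{-1},
\end{equation*}
and then $R_t\chi_F=\widehat{\nu_t}(F)\chi_F$.

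\emph{Step 2: commutation and convergence.} Commutation of the $R_t$ holds on Walsh polynomials because the operators are diagonal there. It extends to $L_1$ since each $R_t$ is a contraction and Walsh polynomials are dense. Alternatively, convolution of measures on the abelian group $G$ is commutative. For $R_t\sot I$, note that $(1+t\lambda_F)^{-1}\to1$ as $t\downarrow0$, so $R_tw\to w$ for every Walsh polynomial $w$. The same $\varepsilon/3$ argument as in Lemma~\ref{lem:biased-signs}(\ref{item:biased-continuity}), using $\norm{R_t}\le1$, then gives convergence for every $f$.

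\emph{Step 3: the resolvent identity.} The identity~\eqref{eq:resolvent} is checked on $\chi_F$. Writing $a=\lambda_F$:
\begin{itemize}
\item the left side has multiplier $\frac{sa}{1+sa}\cdot\frac1{1+ta}$;
\item the right side has multiplier $\frac st\cdot\frac1{1+sa}\cdot\frac{ta}{1+ta}$.
\end{itemize}
These agree. Both sides are bounded operators that coincide on a dense set, so they are equal.

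\emph{Step 4: the norm bound.} Since $R_s$ and $R_t$ are contractions, the left form gives $\norm{(I-R_s)R_t}\le\norm{I-R_s}\norm{R_t}\le2$. The right form gives $\norm{(I-R_s)R_t}\le\frac st\norm{R_s}\norm{I-R_t}\le2s/t$. Taking the minimum yields $2\min\{1,s/t\}$.

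\emph{Step 5: the dyadic identity.} The last identity is again a multiplier check. With $s=2^{-n}$ and $2s=2^{-(n-1)}$, the left side has multiplier
\begin{equation*}
\frac1{1+2sa}\cdot\frac{sa}{1+sa},
\end{equation*}
and the right side has multiplier
\begin{equation*}
\frac1{1+sa}-\frac1{1+2sa}=\frac{sa}{(1+sa)(1+2sa)}.
\end{equation*}
These agree, and density extends the identity to $L_1$. Equivalently, it is~\eqref{eq:resolvent} with $(s,t)=(2^{-n},2^{-(n-1)})$, rearranged.

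\emph{Main obstacle.} The only point needing care is the Fubini/Bochner interchange in Step~1. The kernel measurability from Lemma~\ref{lem:biased-signs}(\ref{item:biased-kernel}) handles it, because $\int\chi_F\,d\nu_t=\int_0^\infty t^{-1}e^{-u/t}\int\chi_F\,d\mu_u\,du$ holds by the definition of $\nu_t$, extended from indicator functions to simple functions. Everything else is algebra on multipliers.
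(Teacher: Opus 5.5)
Your proposal is correct and follows the paper's own argument: compute the multipliers on Walsh characters from Lemma~\ref{lem:biased-signs}(\ref{item:biased-multiplier}) and~\eqref{eq:nu}, verify commutation and both identities on Walsh polynomials, and extend by density and contractivity. The norm bound then comes from the two sides of~\eqref{eq:resolvent} together with $\norm{I-R_t}\le2$. Your explicit multiplier computations, and your observation that the dyadic identity is the case $t=2s$ of~\eqref{eq:resolvent} rearranged using commutativity, are both accurate.
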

\begin{proof}
Formula~\eqref{eq:resolvent-multiplier} follows from part~(\ref{item:biased-multiplier}) of Lemma~\ref{lem:biased-signs} and~\eqref{eq:nu}. It also gives commutation on Walsh polynomials, and hence on $L_1(G,m)$ by density and boundedness. For each finite $F\subset\N$, the multiplier $(1+t\lambda_F)^{-1}$ tends to $1$ as $t\downarrow0$. Hence $R_tw\to w$ in $L_1(G,m)$ as $t\downarrow0$ for every Walsh polynomial $w$. For arbitrary $f\in L_1(G,m)$ and any Walsh polynomial $w$, contractivity gives
\begin{equation*}
\norm{R_tf-f}_1 \le\norm{R_t(f-w)}_1+\norm{R_tw-w}_1+\norm{w-f}_1 \le2\norm{f-w}_1+\norm{R_tw-w}_1.
\end{equation*}
By density, we can first choose $w$ so that $\norm{f-w}_1$ is arbitrarily small and then let $t\downarrow0$. Thus $R_tf\to f$ in $L_1(G,m)$ as $t\downarrow0$. The two identities hold on each Walsh character by the corresponding scalar identities, and therefore on every Walsh polynomial by linearity. By density and boundedness of the operators, they hold on all of $L_1(G,m)$. The bound follows from contractivity and $\norm{I-R_t}\le2$.
\end{proof}

We first define the space $\mathcal{X}_\lambda$ and then realise it isometrically as a complemented subspace of an $L_1$-space. Set
\begin{equation}\label{eq:intrinsic}
\mathcal{X}_\lambda=\{f\in L_1(G,m):\norm{f}_{\mathcal{X}_\lambda}<\infty\},\qquad \norm{f}_{\mathcal{X}_\lambda}=\norm{f}_1+ \sum_{n=1}^\infty2^{n/2}\norm{(I-S_n)f}_1.
\end{equation}
Let $E_m$ be conditional expectation onto the first $m$ coordinates of $G$, with $E_0f=(\int_G f\,dm)\mathbf1$. We begin by proving some elementary properties of $\mathcal{X}_\lambda$.

\begin{lemma}\label{lem:intrinsic}
The space $\mathcal{X}_\lambda$ is a separable Banach space. Every finite-coordinate function belongs to $\mathcal{X}_\lambda$, and these functions are dense in $\mathcal{X}_\lambda$. The restrictions of $E_m$ to $\mathcal{X}_\lambda$ are contractive finite-rank projections and $E_mf\to f$ in $\mathcal{X}_\lambda$ for every $f\in \mathcal{X}_\lambda$.
\end{lemma}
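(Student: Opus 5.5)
We prove the four assertions in order: completeness, then the behaviour of $E_m$, then density, then separability. The one structural fact used throughout is that $E_m$ is itself a convolution operator: $E_m=T_{\beta_m}$, where $\beta_m$ is Haar measure on the subgroup $\{x\in G:x_j=1\text{ for }j\le m\}$. Convolution operators on the abelian group $G$ commute, so $E_m$ commutes with every $C_u$, hence with every $R_t$ (by the Bochner integral formula), and in particular with every $I-S_n$. This can also be checked on Walsh characters, since $E_m\chi_F=\chi_F$ if $F\subset\{1,\dots,m\}$ and $E_m\chi_F=0$ otherwise; density then gives the identity on $L_1$. Consequently
\begin{equation*}
\norm{(I-S_n)E_mf}_1=\norm{E_m(I-S_n)f}_1\le\norm{(I-S_n)f}_1,
\end{equation*}
and summing gives $\norm{E_mf}_{\mathcal{X}_\lambda}\le\norm{f}_{\mathcal{X}_\lambda}$. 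Since $E_m$ is idempotent with range the $2^m$-dimensional space of functions of the first $m$ coordinates, its restriction is a contractive finite-rank projection, provided its range lies in $\mathcal{X}_\lambda$.

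\textbf{Finite-coordinate functions lie in $\mathcal{X}_\lambda$.} Such a function is a Walsh polynomial in $\chi_F$ with $F\subset\{1,\dots,m\}$. By~\eqref{eq:resolvent-multiplier},
\begin{equation*}
(I-S_n)\chi_F=\frac{2^{-n}\lambda_F}{1+2^{-n}\lambda_F}\chi_F,\qquad\text{so}\qquad \norm{(I-S_n)\chi_F}_1\le2^{-n}\lambda_F.
\end{equation*}
The series $\sum_n2^{n/2}2^{-n}\lambda_F$ converges, so each $\chi_F$, and hence each such polynomial, has finite norm.

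\textbf{Completeness.} The quantity $\norm{\cdot}_{\mathcal{X}_\lambda}$ is clearly a norm, and it dominates $\norm{\cdot}_1$. Let $(f_k)$ be Cauchy in $\mathcal{X}_\lambda$. Then $f_k\to f$ in $L_1$. For each fixed $n$, the sequence $(I-S_n)f_k$ converges in $L_1$ to $(I-S_n)f$, since $I-S_n$ is bounded on $L_1$. Fatou's lemma for series, applied to the sums $\sum_n2^{n/2}\norm{(I-S_n)(f_k-f_l)}_1$ with $k$ fixed and $l\to\infty$, gives $f\in\mathcal{X}_\lambda$ and $\norm{f_k-f}_{\mathcal{X}_\lambda}\to0$.

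\textbf{Convergence $E_mf\to f$ (the main point).} Fix $f\in\mathcal{X}_\lambda$. For each $n$ the term $2^{n/2}\norm{(I-S_n)(f-E_mf)}_1$ is dominated by $2\cdot2^{n/2}\norm{(I-S_n)f}_1$, using the commutation above and $\norm{I-E_m}\le2$, and this bound is summable in $n$. Each term also tends to $0$ as $m\to\infty$, because
\begin{equation*}
(I-S_n)(f-E_mf)=(I-E_m)(I-S_n)f
\end{equation*}
and $E_mg\to g$ in $L_1$ for every $g\in L_1$ by martingale convergence (or by density of Walsh polynomials together with contractivity of $E_m$). Likewise $\norm{f-E_mf}_1\to0$. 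Dominated convergence for series then yields $\norm{E_mf-f}_{\mathcal{X}_\lambda}\to0$. The only delicate point is this domination, and the commutation of $E_m$ with $S_n$ supplies it.

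\textbf{Density and separability.} Density of finite-coordinate functions follows immediately, since each $E_mf$ is such a function. Separability follows because the finite-coordinate functions with rational (or rational complex) coefficients are countable and, by the previous step together with the finite-dimensionality of each range, dense in $\mathcal{X}_\lambda$.
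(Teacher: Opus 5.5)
Your proof is correct and takes essentially the same route as the paper. Contractivity comes from commutation of $E_m$ with $S_n$, membership of finite-coordinate functions from the $O(2^{-n})$ multiplier bound, $E_mf\to f$ from termwise convergence with the summable domination by $2\cdot2^{n/2}\norm{(I-S_n)f}_1$, and density and separability follow as corollaries. You also justify the commutation by writing $E_m$ as convolution with Haar measure on a subgroup, which the paper leaves implicit.
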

\begin{proof}
A Cauchy sequence in $\mathcal{X}_\lambda$ has a limit $f$ in $L_1(G,m)$ and limits in the associated $\ell_1$-sum of difference coordinates. Boundedness of each $I-S_n$ identifies these limits with $(I-S_n)f$, proving completeness. On any fixed finite-dimensional span of Walsh characters, $\norm{I-S_n}=O(2^{-n})$, so every finite-coordinate function belongs to $\mathcal{X}_\lambda$. The operators $E_m$ commute with $S_n$ and are contractive on $L_1(G,m)$, giving $\norm{E_mf}_{\mathcal{X}_\lambda}\le\norm{f}_{\mathcal{X}_\lambda}$. For fixed $f\in \mathcal{X}_\lambda$, every coordinate of its norm in~\eqref{eq:intrinsic} converges under $E_m$, while the difference-coordinate tails are bounded by twice the corresponding summable tail for $f$. Thus $E_mf\to f$ in $\mathcal{X}_\lambda$. This also proves density and separability.
\end{proof}

Put $\E=\ell_1\bigl(\N_0;L_1(G,m)\bigr)$. This is an $L_1$-space: give $\Omega=\N_0\times G$ the probability measure $\rho(\{j\}\times D)=p_jm(D)$, where $p_j=2^{-j-1}$, and use the surjective lattice isometry
\begin{equation*}
U\colon L_1(\Omega,\rho)\to\E,\qquad (Uh)_j=p_jh(j,\cdot) \quad(j\in\N_0).
\end{equation*}
Partitioning $[0,1]$ into intervals of lengths $(p_j)_{j\in\N_0}$ and identifying each normalised interval with $G$ by binary digits also gives a lattice isometry $\E\cong L_1[0,1]$.

We will use this identification to apply two band theorems of Liu to the blocks of our projections. Write $\E_{\mathbb R}$ for the real $L_1$-space just described and $\Pi_i$ for its $i$th coordinate projection, $i\in\N_0$. The next lemma shows that an operator belongs to a band whenever all of its blocks do.

\begin{lemma}\label{lem:block-band}
Let $\mathcal B$ be a band in $\mathcal L(\E_{\mathbb R})$. If $T\in\mathcal L(\E_{\mathbb R})$ and $\Pi_iT\Pi_j\in\mathcal B$ for all $i,j\in\N_0$, then $T\in\mathcal B$.
\end{lemma}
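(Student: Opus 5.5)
The proof rests on three facts about the real order-complete Banach lattice $\mathcal L(\E_{\mathbb R})$ (see~\cite[p.~5]{Liu1998}). First, bands are closed under taking absolute values and finite sums. Second, $T$ lies in a band if and only if $\abs{T}$ does. Third, for a band $\mathcal B$, the band projection $P_{\mathcal B}$ onto it is order continuous, so $\mathcal B$ is closed under suprema of increasing nets of positive elements.

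The plan is to exhibit $T$ as an order limit of finite block truncations. For $N\in\N_0$ put $Q_N=\sum_{i\le N}\Pi_i$; this is a band projection on $\E_{\mathbb R}$ and satisfies $0\le Q_N\le I$. Set $T_N=Q_NTQ_N=\sum_{i,j\le N}\Pi_iT\Pi_j$. This is a finite sum of elements of $\mathcal B$, so $T_N\in\mathcal B$.

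It is enough to treat the positive and negative parts of $T$ separately. Note first that for a positive operator $A$ we have $\Pi_iA\Pi_j\ge0$. Moreover, the map $A\mapsto\Pi_iA\Pi_j$ is a lattice homomorphism on $\mathcal L(\E_{\mathbb R})$. One sees this by identifying $\mathcal L(\E_{\mathbb R})$ with the operator matrices $(A_{ij})$, $A_{ij}=\Pi_iA\Pi_j\in\mathcal L(L_1)$, under the entrywise order. Alternatively, the Riesz--Kantorovich formula $\abs{A}x=\sup\{\abs{Ay}:\abs{y}\le x\}$ for $x\ge0$ gives it directly, because $\Pi_j$ and $\Pi_i$ are band projections and the suprema split across coordinates. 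Hence $\Pi_i\abs{T}\Pi_j=\abs{\Pi_iT\Pi_j}\in\mathcal B$, and so $\abs{T}_N:=Q_N\abs{T}Q_N\in\mathcal B$.

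Now $(Q_N\abs{T}Q_N)_N$ is an increasing sequence of positive operators, since $Q_N$ increases and $\abs{T}\ge0$. It is bounded above by $\abs{T}$. Its supremum in $\mathcal L(\E_{\mathbb R})$ is $\abs{T}$. To check this, let $S\ge Q_N\abs{T}Q_N$ for all $N$. For $x\ge0$ we have $Q_Nx\to x$ in norm in the $\ell_1$-sum, so $Q_N\abs{T}Q_Nx\to\abs{T}x$ in norm. Positive cones are closed, hence $Sx\ge\abs{T}x$. The band property then gives $\abs{T}\in\mathcal B$, and therefore $T\in\mathcal B$.

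The main obstacle is the identity $\Pi_i\abs{T}\Pi_j=\abs{\Pi_iT\Pi_j}$. I would prove it with the Riesz--Kantorovich formula. Take $x\ge0$ supported in coordinate $j$. Then $\abs{y}\le x$ forces $y$ to be supported in coordinate $j$, so $\abs{T}x=\sup\{\abs{Ty}:\abs{y}\le x\}$. Applying the band projection $\Pi_i$, which is a lattice homomorphism and order continuous, gives $\Pi_i\abs{T}x=\sup\abs{\Pi_iTy}=\abs{\Pi_iT\Pi_j}x$. For general $x$ one uses $\Pi_j x$, which gives equality of the two operators.
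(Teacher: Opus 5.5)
Your proof is correct and follows the paper's route: truncate with $Q_N$, use $\Pi_i\abs{T}\Pi_j=\abs{\Pi_iT\Pi_j}$ to place $Q_N\abs{T}Q_N$ in $\mathcal B$, and identify $\abs{T}$ as the order supremum via norm convergence on positive vectors. The only difference is in justifying the modulus identity: you use the Riesz--Kantorovich formula on $\E_{\mathbb R}$, whereas the paper notes that $S\mapsto\Pi_iS\Pi_j$ is an idempotent on $\mathcal L(\E_{\mathbb R})$ lying between $0$ and the identity, hence a band projection that preserves moduli.
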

\begin{proof}
The map $S\mapsto\Pi_iS\Pi_j$ is a positive projection dominated by the identity on the operator lattice, hence a band projection. It therefore preserves moduli: $|\Pi_iT\Pi_j|=\Pi_i|T|\Pi_j$. For $N\in\N_0$, put $H_N=\sum_{i=0}^N\Pi_i$. Then
\begin{equation*}
\sum_{i,j=0}^N|\Pi_iT\Pi_j|=H_N|T|H_N.
\end{equation*}
Since each summand is positive and $0\le H_N\le I$, these sums increase in the operator order and are bounded above by $|T|$. Moreover, $H_N|T|H_Nf\to|T|f$ in norm as $N\to\infty$ for every $f\ge0$, so their supremum is $|T|$. Each finite sum belongs to $\mathcal B$, so closure under order suprema gives $|T|$, and hence $T$, in $\mathcal B$.
\end{proof}

The definition of the norm on $\mathcal{X}_\lambda$ gives the isometric embedding
\begin{equation}\label{eq:embedding}
J\colon \mathcal{X}_\lambda\to\E,\qquad (Jf)_0=f,\qquad (Jf)_n=2^{n/2}(I-S_n)f\quad(n\ge1).
\end{equation}
To construct a projection onto its image, define $K\colon\E\to L_1(G,m)$ by
\begin{equation}\label{eq:retraction}
Kx=S_0x_0+\sum_{n=1}^\infty2^{-n/2}S_{n-1}x_n \qquad(x=(x_n)_{n\in\N_0}\in\E).
\end{equation}
Since the operators $S_n$, $n\in\N_0$, are contractions, the series converges absolutely in $L_1(G,m)$ and $\norm{K}_{\E\to L_1(G,m)}\le1$.

\begin{proposition}\label{prop:projection}
The operator $K$ takes values in $\mathcal{X}_\lambda$. As a map $K\colon\E\to\mathcal{X}_\lambda$, where the latter is equipped with the norm $\norm{\cdot}_{\mathcal{X}_\lambda}$ defined in~\eqref{eq:intrinsic}, it satisfies $\norm{K}_{\E\to\mathcal{X}_\lambda}\le4+3\sqrt2$ and $KJ=I_{\mathcal{X}_\lambda}$. Consequently $P=JK$ is a projection on $\E$ with $\norm{P}\le4+3\sqrt2$ and range $J\mathcal{X}_\lambda$. Its zeroth diagonal block is $P_{00}=R_1$.
\end{proposition}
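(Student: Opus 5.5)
The plan is to verify the identity $KJ=I_{\mathcal X_\lambda}$ by a telescoping argument, and then to bound $\norm{Kx}_{\mathcal X_\lambda}$ coordinate by coordinate using the resolvent estimate~\eqref{eq:resolvent}. The remaining assertions are formal consequences.

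\emph{Step 1: $KJ=I$.} For $f\in\mathcal X_\lambda$, substituting~\eqref{eq:embedding} into~\eqref{eq:retraction} cancels the weights $2^{\pm n/2}$. This gives
\begin{equation*}
KJf=S_0f+\sum_{n=1}^\infty S_{n-1}(I-S_n)f.
\end{equation*}
By the last identity of Proposition~\ref{prop:resolvents}, $S_{n-1}(I-S_n)=S_n-S_{n-1}$. The partial sums therefore telescope to $S_Nf$. Since $S_N=R_{2^{-N}}\sot I$ (Proposition~\ref{prop:resolvents}), we get $KJf=f$ in $L_1$.

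\emph{Step 2: the norm bound.} Fix $x\in\E$. By linearity and the triangle inequality, it suffices to bound the contribution of each coordinate $x_n$ separately, by $c_n\norm{x_n}_1$ with $c_n\le4+3\sqrt2$.

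For $n\ge1$, the term $2^{-n/2}S_{n-1}x_n$ contributes $2^{-n/2}\norm{x_n}_1$ to the $L_1$ part of the norm. Its contribution to the $m$th difference term is controlled by~\eqref{eq:resolvent} with $s=2^{-m}$ and $t=2^{-(n-1)}$:
\begin{equation*}
2^{m/2}\norm{(I-S_m)S_{n-1}}\,2^{-n/2}\le 2\cdot2^{(m-n)/2}\min\{1,2^{n-1-m}\}.
\end{equation*}
I then split the sum over $m$ into two ranges.
\begin{itemize}
\item For $m\le n-1$, the geometric sum gives at most $2(\sqrt2+1)\bigl(1-2^{(1-n)/2}\bigr)$.
\item For $m\ge n$, it gives $2^{n/2}\sum_{m\ge n}2^{-m/2}=\sqrt2/(\sqrt2-1)=2+\sqrt2$.
\end{itemize}
Adding the $L_1$ part gives
\begin{equation*}
c_n\le 4+3\sqrt2+2^{-n/2}\bigl(1-2\sqrt2(\sqrt2+1)\bigr)<4+3\sqrt2 .
\end{equation*}
For $n=0$, the analogous computation with $t=1$ gives $c_0\le1+2\sum_{m\ge1}2^{-m/2}=3+2\sqrt2$.

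Summing over $n$ yields $\norm{Kx}_{\mathcal X_\lambda}\le(4+3\sqrt2)\norm{x}_\E$. In particular $Kx$ has finite $\mathcal X_\lambda$-norm, so $K$ takes values in $\mathcal X_\lambda$; this needs only the absolute convergence of the series in $L_1$ and the boundedness of $I-S_m$, which let us move $I-S_m$ inside the series.

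\emph{Step 3: the formal consequences.} Since $J$ is an isometry, $\norm{P}\le\norm{K}$. From $KJ=I$ we get $P^2=J(KJ)K=P$, so $P$ is a projection, and its range is $J\mathcal X_\lambda$. Finally, $P_{00}$ is $(JKx)_0=Kx$ restricted to $x=(x_0,0,0,\dots)$, which equals $S_0x_0=R_1x_0$.

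The main obstacle is Step 2. A crude count (bounding every term by $2\min\{1,s/t\}$ and ignoring the negative correction) yields $5+3\sqrt2$. The stated constant requires keeping the exact finite geometric sum over $m\le n-1$, so that its deficit $2(\sqrt2+1)2^{(1-n)/2}$ absorbs the $L_1$ contribution $2^{-n/2}$.
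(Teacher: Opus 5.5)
Your proposal is correct and follows essentially the paper's route: the telescoping identity $S_{n-1}(I-S_n)=S_n-S_{n-1}$ gives $KJ=I_{\mathcal X_\lambda}$, and the column sums of the block norms $2^{(m-n)/2}\norm{(I-S_m)S_{n-1}}$ are bounded via \eqref{eq:resolvent}, with the zeroth-row term absorbed into the geometric sum just as the paper does when it bounds $r^j+2\sum_{k=1}^{j-1}r^k$ by $2r/(1-r)$. The only minor difference is that you obtain $Kx\in\mathcal X_\lambda$ for every $x$ directly, by moving $I-S_m$ through the absolutely convergent series and summing nonnegative terms, whereas the paper proves the estimate on finitely supported $x$ and then extends by density.
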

\begin{proof}
Set $A_0=I$, $B_0=S_0$, and $A_i=I-S_i$, $B_i=S_{i-1}$ for $i\ge1$. Define the operators
\begin{equation}\label{eq:blocks}
P_{ij}=2^{(i-j)/2}A_iB_j\qquad(i,j\in\N_0).
\end{equation}
We shall show that these are the blocks of the bounded projection $P=JK$. Put $r=2^{-1/2}$. For $j\ge1$, the zeroth block has norm at most $r^j$, and~\eqref{eq:resolvent} gives
\begin{equation*}
\norm{P_{ij}}\le\begin{cases}
    2r^{j-i},&1\le i<j,\\
    r^{i-j},&i\ge j.
\end{cases}
\end{equation*}
It follows that
\begin{equation}\label{eq:column}
\sum_{i=0}^\infty\norm{P_{ij}} \le r^j+2\sum_{k=1}^{j-1}r^k+\sum_{k=0}^\infty r^k \le\frac{1+2r}{1-r}=4+3\sqrt2.
\end{equation}
For $j=0$, the bounds are $\norm{P_{00}}\le1$ and $\norm{P_{i0}}\le2r^i$ for $i\ge1$, so that the column sum is at most $3+2\sqrt2$. For finitely supported $x\in\E$, these estimates give $\norm{Kx}_{\mathcal{X}_\lambda}\le(4+3\sqrt2)\norm{x}_{\E}$. Since the finitely supported sequences are dense in $\E$ and $\mathcal{X}_\lambda$ is complete, this restriction extends uniquely to a bounded operator from $\E$ to $\mathcal{X}_\lambda$ with norm at most $4+3\sqrt2$. Composing with the continuous inclusion $\mathcal{X}_\lambda\hookrightarrow L_1(G,m)$ recovers the original operator $K$ by density. From now on, we regard $K$ as a bounded operator $K\colon\E\to\mathcal{X}_\lambda$. For $f\in \mathcal{X}_\lambda$, the series defining $KJf$ is absolutely convergent and its partial sums are
\begin{equation*}
S_0f+\sum_{n=1}^N S_{n-1}(I-S_n)f =S_Nf\longrightarrow f\quad\text{in }L_1(G,m).
\end{equation*}
Thus $KJ=I_{\mathcal{X}_\lambda}$, and $P^2=JKJK=JK=P$ with range $J\mathcal{X}_\lambda$. Finally, $P_{00}=A_0B_0=S_0=R_1$.
\end{proof}

\section{The Schur example}\label{sec:schur}

We now choose
\begin{equation*}
\lambda_j=\sqrt{\log(j+1)}\qquad(j\in\N),
\end{equation*}
and denote the corresponding space $\mathcal{X}_\lambda$ by $\mathcal{CS}$. Throughout this section, the measures and operators from Section~\ref{sec:general-construction} refer to this choice of $\lambda$. We will show that, for this choice, the resolvent measures are singular with respect to Haar measure, while their convolution operators are compact on $L_2$; see Schachermayer~\cite[Section~3]{Schachermayer1986} for a precedent.

A \emph{common carrier} for $(\mu_u)_{u>0}$ is a Borel set $B\subset G$ such that $\mu_u(B)=1$ for every $u>0$. The next lemma gives a Haar-null common carrier, so the exponential averages are singular as well. 

\begin{lemma}\label{lem:carrier}
There is a Borel set $N\subset G$ such that $m(N)=0$ and $\mu_u(N)=1$ for every $u>0$. 
\end{lemma}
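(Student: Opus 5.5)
The plan is to separate Haar measure from every $\mu_u$ with a single statistic: dyadic block averages of the coordinates. Under $\mu_u$ the coordinates have means $e^{-u\lambda_j}$ by~\eqref{eq:bias}, and these decay much more slowly than any power of the block length. Under $m$ they have mean zero. For $k\in\N$ put $I_k=\{j\in\N:2^k<j\le2^{k+1}\}$, so $\abs{I_k}=2^k$, and define
\begin{equation*}
a_k(x)=2^{-k}\sum_{j\in I_k}x_j,\qquad N=\Bigl\{x\in G:\sup_{k}2^{k/3}a_k(x)=\infty\Bigr\}.
\end{equation*}
Each $a_k$ depends on finitely many coordinates and is continuous. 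Hence $x\mapsto\sup_k2^{k/3}a_k(x)$ is a lower semicontinuous function with values in $(-\infty,\infty]$, so $N$ is Borel.

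First I will show $m(N)=0$. Under $m$ the coordinates are independent symmetric signs. Hoeffding's inequality for $2^k$ independent variables with values in $[-1,1]$ gives
\begin{equation*}
m\bigl(\abs{a_k}>2^{-k/3}\bigr)\le2\exp\bigl(-2^{k}2^{-2k/3}/2\bigr)=2\exp\bigl(-2^{k/3-1}\bigr),
\end{equation*}
which is summable in $k$. By Borel--Cantelli, for $m$-almost every $x$ we have $2^{k/3}\abs{a_k(x)}\le1$ for all large $k$. Therefore $\sup_k2^{k/3}a_k(x)<\infty$ for $m$-almost every $x$, and $m(N)=0$.

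Next, fix $u>0$ and show $\mu_u(N)=1$. Under $\mu_u$ the coordinates are independent with values in $\{-1,1\}$. The same Hoeffding bound, applied to $a_k-\int a_k\,d\mu_u$, gives
\begin{equation*}
a_k\ge\int_G a_k\,d\mu_u-2^{-k/3}\quad\text{for all large }k,\quad \mu_u\text{-almost surely}.
\end{equation*}
Since $\lambda_j$ is increasing, the mean satisfies
\begin{equation*}
\int_G a_k\,d\mu_u\ge e^{-u\lambda_{2^{k+1}}}=\exp\Bigl(-u\sqrt{\log(2^{k+1}+1)}\Bigr)\ge e^{-Cu\sqrt{k}}
\end{equation*}
for some absolute constant $C$. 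As $\sqrt k=o(k)$, we get $2^{k/3}e^{-Cu\sqrt k}\to\infty$. Combining the two displays, $2^{k/3}a_k(x)\ge2^{k/3}e^{-Cu\sqrt k}-1\to\infty$ for $\mu_u$-almost every $x$, so $\mu_u(N)=1$. The set $N$ does not depend on $u$, so it is a common carrier.

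I expect no serious obstacle. The only point requiring care is the choice of normalisation $2^{k/3}$. It must be large enough that Haar fluctuations of size about $2^{-k/2}$ stay bounded after rescaling. It must also be small enough that the biased means, which decay only like $e^{-Cu\sqrt k}$, still blow up after rescaling for every fixed $u>0$. Any exponent strictly between $0$ and $1/2$ in place of $1/3$ works. This is exactly where the choice $\lambda_j=\sqrt{\log(j+1)}$ enters: the growth of $\lambda_j$ must be $o(\log j)$.
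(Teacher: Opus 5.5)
Your proof is correct and follows essentially the paper's route: the paper also separates $m$ from every $\mu_u$ by dyadic averages of the coordinates against a threshold $2^{-\alpha r}$ with $0<\alpha<1/2$, applying Borel--Cantelli under both measures (it takes $\alpha=1/4$, initial segments $\{1,\dots,2^r\}$, and the set $\bigcup_{r_0}\bigcap_{r\ge r_0}\{Z_r>2^{-r/4}\}$). The differences are only cosmetic: you use disjoint blocks, a $\sup$-defined carrier and Hoeffding's inequality, whereas the paper uses second-moment Chebyshev bounds, which already suffice because the variance $2^{-r}$ divided by the squared threshold is summable.
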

\begin{proof}
For $r\ge1$, put
\begin{equation*}
Z_r(x)=2^{-r}\sum_{j=1}^{2^r}x_j,\qquad a_r=2^{-r/4},\qquad N=\bigcup_{r_0\in\N}\bigcap_{r\ge r_0}\{Z_r>a_r\}.
\end{equation*}
Thus $x\in N$ if there is a starting index $r_0\in\N$, possibly depending on $x$, such that $Z_r(x)>a_r$ for every $r\ge r_0$. Under $m$, the mean of $Z_r$ is zero and its variance is $2^{-r}$. By Chebyshev's inequality, for every $r\in\N$,
\begin{equation*}
m\{Z_r>a_r\} \le m\{|Z_r|>a_r\} =m\{|Z_r-\mathbb E_m[Z_r]|>a_r\} \le\frac{\Var_m(Z_r)}{a_r^2} =\frac{2^{-r}}{2^{-r/2}} =2^{-r/2}.
\end{equation*}
Consequently,
\begin{equation*}
\sum_{r=1}^{\infty}m\{Z_r>a_r\} \le\sum_{r=1}^{\infty}2^{-r/2}<\infty.
\end{equation*}
Borel--Cantelli gives $m(N)=0$. For fixed $u>0$, the mean $b_r(u)=\int_G Z_r\,d\mu_u$ satisfies
\begin{equation*}
b_r(u)\ge e^{-u\sqrt{\log(2^r+1)}} \ge e^{-u\sqrt{(r+1)\log2}},\qquad \Var_{\mu_u}(Z_r)\le2^{-r}.
\end{equation*}
Thus $b_r(u)/a_r\to\infty$ as $r\to\infty$. Applying Chebyshev's inequality again gives
\begin{equation*}
\sum_{r=1}^{\infty}\mu_u\{|Z_r-b_r(u)|>b_r(u)/2\} \le4\sum_{r=1}^{\infty}2^{-r}e^{2u\sqrt{(r+1)\log2}}<\infty.
\end{equation*}
Another application of Borel--Cantelli gives $\mu_u(N)=1$.
\end{proof}

By~\eqref{eq:nu} and Lemma~\ref{lem:carrier}, $\nu_t(N)=1$ for every $t>0$, so each $\nu_t$ is singular with respect to $m$.

\begin{proposition}\label{prop:compact-resolvents}
For every $t>0$, the operator $R_t$ is compact on $L_2(G,m)$. Its restriction $R_t\colon L_\infty(G,m)\to L_1(G,m)$ is also compact.
\end{proposition}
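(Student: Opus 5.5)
On $L_2(G,m)$ the Walsh characters form an orthonormal basis. By \eqref{eq:resolvent-multiplier}, $R_t$ is diagonal in this basis, with eigenvalues $(1+t\lambda_F)^{-1}$. A diagonal operator is compact exactly when its eigenvalues tend to zero along the index set. So compactness on $L_2$ reduces to showing that, for every $\varepsilon>0$, only finitely many finite sets $F\subset\N$ satisfy $(1+t\lambda_F)^{-1}>\varepsilon$, that is, $\lambda_F<M:=(\varepsilon^{-1}-1)/t$.

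To check this finiteness, note that $\lambda_j=\sqrt{\log(j+1)}$ is increasing and positive with $\lambda_1=\sqrt{\log2}>0$. The constraint $\lambda_F<M$ therefore gives two bounds. First, $\abs F\le M/\sqrt{\log 2}$. Second, every $j\in F$ satisfies $\lambda_j<M$, i.e.\ $j<e^{M^2}-1$. So $F$ is a subset of a fixed finite set, and there are only finitely many such $F$. With this, $R_t$ is a norm limit of its finite-rank truncations $\sum_{\lambda_F\le M}(1+t\lambda_F)^{-1}\langle\cdot,\chi_F\rangle\chi_F$, and the truncation error has norm at most $(1+tM)^{-1}\to0$. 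This step is the heart of the proof, but it is elementary: everything needed is that $\lambda_j\to\infty$ and $\inf_j\lambda_j>0$. (The slow growth $\sqrt{\log}$ matters only for singularity, in Lemma~\ref{lem:carrier}, not here.)

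For the second statement, factor the restriction as
\begin{equation*}
L_\infty(G,m)\hookrightarrow L_2(G,m)\xrightarrow{R_t}L_2(G,m)\hookrightarrow L_1(G,m).
\end{equation*}
Both inclusions are bounded with norm $1$, because $m$ is a probability measure. The middle factor is compact, so the composite is compact. One should also check that this composite agrees with $R_t$ as defined on $L_1$. Both are the convolution operator $T_{\nu_t}$, given by the same integral formula, so they coincide on $L_2\subset L_1$. Alternatively, they agree on Walsh polynomials and both are continuous.

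I expect no real obstacle. The only point needing care is the counting argument above, and in particular the lower bound $\lambda_j\ge\sqrt{\log2}$, which controls the cardinality of $F$.
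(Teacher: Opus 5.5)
Your proof is correct and follows the paper's argument. You diagonalise $R_t$ in the Walsh basis, note that $\{F:\lambda_F\le L\}$ is finite because each such $F$ lies in the finite set $\{j:\lambda_j\le L\}$, truncate with error at most $(1+tL)^{-1}$, and factor the $L_\infty\to L_1$ restriction through $L_2$. One small point: your cardinality bound and the hypothesis $\inf_j\lambda_j>0$ are redundant, since nonnegativity of the $\lambda_j$ alone places $F$ inside a fixed finite set, and a finite set has only finitely many subsets.
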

\begin{proof}
The Walsh characters form an orthonormal basis of $L_2(G,m)$ and, by~\eqref{eq:resolvent-multiplier}, are eigenvectors of $R_t$ with eigenvalues $(1+t\lambda_F)^{-1}$. For every $0<L<\infty$, only finitely many $F$ satisfy $\lambda_F\le L$, since such $F$ are contained in $\{j\in\N:\lambda_j\le L\}$, which is finite because $\lambda_j\to\infty$ as $j\to\infty$. Setting all other eigenvalues to zero gives a finite-rank operator whose distance from $R_t$ in the $L_2$-operator norm is at most $(1+tL)^{-1}$. Letting $L\to\infty$ proves compactness on $L_2(G,m)$. Composing with $L_\infty\hookrightarrow L_2\hookrightarrow L_1$ gives compactness of the restriction to $L_\infty$.
\end{proof}

Every block of the projection has compact restriction to bounded functions. Liu's theorem that Dunford--Pettis operators form a band \cite[Lemma~2.17]{Liu1998} allows us to assemble these blocks using Lemma~\ref{lem:block-band}.

\begin{proposition}\label{prop:schur}
The projection $P$ is Dunford--Pettis, and $\mathcal{CS}$ has the Schur property.
\end{proposition}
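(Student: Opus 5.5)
The plan is to show that every block $P_{ij}=2^{(i-j)/2}A_iB_j$ from~\eqref{eq:blocks} is Dunford--Pettis. Then Liu's band theorem and Lemma~\ref{lem:block-band} assemble the blocks into $P$. The Schur property of $\mathcal{CS}$ is read off at the end.

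\emph{Step 1: each block is Dunford--Pettis.} Every $B_j$ is a resolvent $S_k=R_{2^{-k}}$ with $k=\max\{j-1,0\}$, so $B_j=R_t$ for some $t>0$. By Proposition~\ref{prop:compact-resolvents}, $R_t\colon L_\infty(G,m)\to L_1(G,m)$ is compact. Composing with the bounded operator $2^{(i-j)/2}A_i$ on $L_1(G,m)$ shows that $P_{ij}$ has compact restriction to $L_\infty(G,m)$. The space $(G,m)$ is a standard nonatomic probability space, so Lemma~\ref{lem:dp} makes each $P_{ij}\colon L_1(G,m)\to L_1(G,m)$ Dunford--Pettis. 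Hence so is $\Pi_iP\Pi_j$ as an operator on $\E$, since it factors through the coordinate inclusion and coordinate projection.

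\emph{Step 2: assembling the blocks.} First reduce to the real case. Every operator here (conditional expectations aside) is a convolution by a real measure, so $P$ maps real functions to real functions. In the complex case, $P$ is the complexification of its real restriction, and it is Dunford--Pettis if and only if that restriction is. On the real $L_1$-space $\E_{\mathbb R}$, Liu~\cite[Lemma~2.17]{Liu1998} says the Dunford--Pettis operators form a band $\mathcal B$ in $\mathcal L(\E_{\mathbb R})$. Since every $\Pi_iP\Pi_j\in\mathcal B$ by Step~1, Lemma~\ref{lem:block-band} gives $P\in\mathcal B$. Under the lattice isometry $\E\cong L_1[0,1]$, this yields a Dunford--Pettis projection on $L_1[0,1]$.

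\emph{Step 3: the Schur property.} Let $(y_k)$ be weakly null in the range $J\mathcal{CS}$ of $P$. Then it is weakly null in $\E$, so $y_k=Py_k$ is norm null because $P$ is Dunford--Pettis. Since $J$ is an isomorphism onto its range, $\mathcal{CS}$ has the Schur property.

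\emph{Main obstacle.} The delicate point is Step~2. One must check that Liu's lemma applies in exactly the form needed, namely that it gives a band in the order-complete lattice $\mathcal L(\E_{\mathbb R})$ of regular operators, and that $P$ lies in this lattice. The second fact holds because $\E_{\mathbb R}$ is an $L_1$-space, where every operator is regular, as noted in the preliminaries. The real/complex reduction also needs a sentence of justification. Step~1 is routine given the earlier results.
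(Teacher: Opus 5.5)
Your proposal is correct and follows the paper's proof essentially step for step. You get compact restriction of each block to $L_\infty(G,m)$ via Proposition~\ref{prop:compact-resolvents} and Lemma~\ref{lem:dp}, assemble the blocks over the reals with Liu's band theorem and Lemma~\ref{lem:block-band}, reduce the complex case by complexification, and obtain the Schur property because $P$ is the identity on its range. The only cosmetic difference is that you compose the compact map $R_t\colon L_\infty(G,m)\to L_1(G,m)$ with $A_i$ acting on $L_1(G,m)$, whereas the paper uses compactness of $B_j$ and boundedness of $A_i$ on $L_2(G,m)$; both give the same conclusion.
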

\begin{proof}
First work over the reals. By Proposition~\ref{prop:compact-resolvents}, each $B_j$ is compact on $L_2$, and each $A_i$ is bounded there. Thus every block $P_{ij}\colon L_1(G,m)\to L_1(G,m)$ has compact restriction to $L_\infty(G,m)$ and is Dunford--Pettis by Lemma~\ref{lem:dp}. The extended blocks $\Pi_iP\Pi_j$ are Dunford--Pettis as well, since bounded pre- and postcomposition preserve this property. Liu's band theorem and Lemma~\ref{lem:block-band} therefore make $P$ Dunford--Pettis. Over the complex field, $P$ is the complexification of this real operator. Applying the real result to the real and imaginary parts of a weakly null sequence gives the same conclusion. The restriction of $P$ to $J\mathcal{CS}$ is the identity, so $J\mathcal{CS}$, and therefore $\mathcal{CS}$, has the Schur property.
\end{proof}

We can now complete the first construction by proving that its range fails the Radon--Nikod\'ym property. The singular measure defining $P_{00}=R_1$ provides the obstruction: by Lemma~\ref{lem:singular}, this convolution operator is not representable.

\begin{proof}[Proof of Theorem~\ref{thm:main}]
Propositions~\ref{prop:projection} and~\ref{prop:schur} give the projection, its bound and the Schur property of its range $J\mathcal{CS}$, after identifying $\E$ with $L_1[0,1]$. The map $R_1\colon L_1(G,m)\to\mathcal{CS}$ is bounded, since it is the restriction of $K$ to the zeroth coordinate. If $\mathcal{CS}$ had the Radon--Nikod\'ym property, this map would be representable by the standard operator characterization of that property; see~\cite[p.~61]{Girardi}. Postcomposing with the continuous inclusion $\mathcal{CS}\hookrightarrow L_1(G,m)$ would represent convolution by $\nu_1$, contrary to Lemmas~\ref{lem:carrier} and~\ref{lem:singular}. Thus $\mathcal{CS}$, and therefore $J\mathcal{CS}$, fails the Radon--Nikod\'ym property.

Over the reals, if $J\mathcal{CS}$ were isomorphic to a Banach lattice, \cite[Corollary~2.3]{DeHeviaLattices} would make it isomorphic to an $L_1$-space. Such a space, being separable and infinite dimensional, is either isomorphic to $\ell_1$ or contains a copy of $L_1[0,1]$. The Schur property excludes the latter alternative, whereas failure of the Radon--Nikod\'ym property excludes $\ell_1$.

For complex scalars, the underlying real space of $\E$ is isomorphic to $\E_{\mathbb R}\oplus_1\E_{\mathbb R}$. Thus the underlying real range is complemented in a real $L_1$-space and still has the Schur property and fails the Radon--Nikod\'ym property. Since the underlying real space of a complex Banach lattice is real-isomorphic to the product of two copies of its real part, the same argument excludes an isomorphism with a complex Banach lattice.
\end{proof}

\section{The non-Schur example}\label{sec:non-schur}

We now set $\lambda_j=1$ for every $j\in\N$. Lemma~\ref{lem:biased-signs} gives the corresponding probability kernel and strongly continuous semigroup of positive contractions. Throughout this section, $C_u$, $R_t$, and $S_n=R_{2^{-n}}$, $n\in\N_0$, refer to this choice. Part~(\ref{item:biased-multiplier}) of that lemma and the resolvent formula~\eqref{eq:nu} give
\begin{equation*}
C_u\chi_F=e^{-u|F|}\chi_F,\qquad R_t\chi_F=(1+t|F|)^{-1}\chi_F.
\end{equation*}
Denote the corresponding space $\mathcal{X}_\lambda$ from~\eqref{eq:intrinsic} by $\mathcal{RB}$, and retain the maps $J$ and $K$ from~\eqref{eq:embedding}--\eqref{eq:retraction}. By Proposition~\ref{prop:projection}, $Q=JK$ is a projection on $\E$, with range $J\mathcal{RB}$ and $\norm{Q}\le4+3\sqrt2$. Its blocks and column estimates are given by~\eqref{eq:blocks} and~\eqref{eq:column}, with the present resolvents.

An operator between real $L_1$-spaces is \emph{non-Enflo} if it is bounded below on no subspace isomorphic to $L_1[0,1]$. We use the Enflo--Starbird characterization of these operators \cite{ES1979}, in the form stated in \cite[Theorem~1]{Starbird1976}, and Liu's theorem that they form a band in $\mathcal L(L_1[0,1])$ \cite[Lemma~2.14]{Liu1998}. Lemma~\ref{lem:block-band} will pass this property from individual blocks to the whole projection.

We shall apply the following lemma to the truncated resolvents.

\begin{lemma}\label{lem:smoothing-non-enflo}
Let $T\colon L_1(G,m)\to L_1(G,m)$ be an operator over the real field. If $\norm{Tf}_2\le c\norm{f}_p$ for every $f\in L_p(G,m)$, where $1<p<2$ and $c>0$, then $T$ is non-Enflo.
\end{lemma}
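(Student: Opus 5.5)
Argue by contradiction, using the Enflo--Starbird characterization in the form of \cite[Theorem~1]{Starbird1976}. If $T$ were not non-Enflo, that characterization gives a sequence of functions in $L_1(G,m)$, equivalent to the Haar basis of $L_1$ (or a Rademacher-type tree), on which $T$ is bounded below. The usable consequence: there is $\delta>0$ and a sequence $(h_k)$ of functions with $\norm{h_k}_1=1$ and $\norm{Th_k}_1\ge\delta$, where $h_k$ are (up to small perturbation) $\pm1$-valued on supports $D_k$ with $m(D_k)\to0$. A simpler route avoids the tree structure: take the image copy $Y\cong L_1[0,1]$ inside $L_1(G,m)$ on which $T$ is bounded below, pick normalized disjointly-supported-like elements of $Y$ with shrinking supports, e.g.\ images $g_k$ of normalized indicators $2^{k}\ind_{[0,2^{-k}]}$ under the isomorphism, so $\norm{Tg_k}_1\ge\delta$.

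The smoothing hypothesis controls exactly such elements. For $h=\ind_D/m(D)$ we get
\begin{equation*}
\norm{Th}_1\le\norm{Th}_2\le c\norm{h}_p=c\,m(D)^{1/p-1}\to\infty,
\end{equation*}
which is the wrong direction, so instead use the bound on sign-averaged sums: the key computation will be that an $L_1$-isomorphic copy contains, for each $n$, elements $f=\sum_{i=1}^{n}\varepsilon_i f_i$ with $f_i$ having disjoint supports (or close to it), $\norm{f_i}_1=1/n$, and the $L_1$-structure means $\norm{Tf}_1\ge\delta$ for all sign choices. Averaging over signs and using Khintchine in $L_1$ (valid in $L_2$ for the image) gives $\mathbb E_\varepsilon\norm{Tf}_2\ge\mathbb E\norm{Tf}_1\ge\delta$, while orthogonality of Rademacher averages in $L_2$ gives $\mathbb E_\varepsilon\norm{Tf}_2^2=\sum_i\norm{Tf_i}_2^2\le c^2\sum_i\norm{f_i}_p^2$. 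Then I need $\norm{f_i}_p$ small, i.e.\ the $f_i$ should be bounded-ish: replace each $f_i$ by a flat piece $f_i=\pm\ind_{D_i}$ scaled so $\norm{f_i}_1=1/n$ with $m(D_i)=1/n$, giving $\norm{f_i}_p=n^{-1}$ and $\sum_i\norm{f_i}_p^2=n^{-1}\to0$. Contradiction.

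So the plan is: (1) from Enflo--Starbird/Starbird, extract that $T$ being Enflo yields $\delta>0$ and, for each $n$, a partition-like family $D_1,\dots,D_n$ of sets of measure $\approx1/n$ (or functions uniformly equivalent to such in $L_1$ and in $L_p$-size) with $\norm{T(\sum\varepsilon_i\ind_{D_i})}_1\ge\delta$ for all signs; this is precisely what Starbird's theorem provides (an operator is non-Enflo iff it fails to preserve such Rademacher-like trees up to a constant, essentially an $L_\infty$-bounded dyadic tree). (2) Average in signs as above to get $\delta^2\le c^2 n\cdot n^{-2/p}\cdot$ — more carefully $\sum_i m(D_i)^{2/p}=n^{1-2/p}\to0$ since $p<2$. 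The main obstacle is step (1): ensuring the tree furnished by the Enflo--Starbird criterion consists of functions bounded in $L_\infty$ (so the $L_p$ norms scale like measures), rather than merely $L_1$-normalized elements of an abstract copy; I would first check that Starbird's formulation is stated in terms of the image of the Haar/Rademacher system on $[0,1]$, i.e.\ $T$ restricted to some $L_1(\Sigma)$ of a sub-$\sigma$-algebra where the tree functions are genuinely $\pm1$-valued, and otherwise use the Enflo--Starbird reduction that an Enflo operator is an isomorphism on $L_1$ of a sub-$\sigma$-algebra after a change of density.
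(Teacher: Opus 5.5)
Your key estimate is the same as the paper's. For a partition $\{E_k\}$ of a set $E_0$ into pieces of measure at most $\delta_n$,
\[
\sum_k\norm{T\ind_{E_k}}_2^2\le c^2\sum_k m(E_k)^{2/p}\le c^2\,m(E_0)\,\delta_n^{2/p-1}\longrightarrow0,
\]
and this tends to zero precisely because $p<2$. Your equal-measure case gives $n^{1-2/p}$; note that $\norm{\ind_{D_i}}_p=n^{-1/p}$, not $n^{-1}$, which you correct later.

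What differs is how the Enflo--Starbird theorem enters. The paper does not argue by contradiction. It uses the criterion in \emph{bush} form: if $\int_G\max_k\abs{T\ind_{E_{n,k}}}\,dm\to0$ for every refining sequence of partitions $(E_{n,k})_k$ of a set $E_0$ with mesh $\delta_n\to0$, then $T$ is non-Enflo. It verifies this directly from the pointwise bound $\max_k\abs{g_k}\le(\sum_k\abs{g_k}^2)^{1/2}$ and Cauchy--Schwarz, which give the bound $c\,m(E_0)^{1/2}\delta_n^{1/p-1/2}$. Your Rademacher averaging, $\mathbb E_\varepsilon\norm{\sum_k\varepsilon_kT\ind_{E_k}}_2^2=\sum_k\norm{T\ind_{E_k}}_2^2$, is the same square function in another guise.

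Because the bush criterion is already phrased through indicators of sets in the \emph{domain} of $T$, the obstacle you flag in step (1) does not arise. You also never need a lower bound for every choice of signs. If you want to keep the contradiction, take step (1) to be the contrapositive of that criterion: a bush with $\int_G\max_k\abs{T\ind_{E_{n,k}}}\,dm\ge\beta$ along a subsequence. Your averaging then finishes the argument. Invoking a stronger structural version, such as an isomorphism on a Haar system built on a tree of sets, is more than you need.

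You should discard the ``simpler route'' through an abstract copy $Y\cong L_1[0,1]$. The images of normalised indicators under an isomorphism onto $Y$ need not lie in $L_p(G,m)$ at all, so they cannot be replaced by flat pieces. For instance, take $\{\phi g\}$ with $g$ a function of the odd coordinates and $\phi\ge0$ a fixed density of the even coordinates with $\phi\notin L_p$. This space is isometric to $L_1$ and meets $L_p(G,m)$ only in $0$. The same objection applies to your fallback via a change of density, unless the density is bounded. The hypothesis can only be used on genuine indicator functions in the domain, and that is exactly what the bush form supplies.
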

\begin{proof}
A \emph{bush} on $G$ is a refining sequence $(E_{n,k})_{k=1}^{m_n}$, $n\in\N_0$, of finite measurable partitions of a set $E_0$ of positive measure, with $m_0=1$ and $\delta_n=\max_{1\le k\le m_n}m(E_{n,k})\to0$ as $n\to\infty$. For such a bush, the assumed estimate gives
\begin{equation*}
\int_G\max_{1\le k\le m_n}|T\ind_{E_{n,k}}|\,dm \le\Big(\sum_{k=1}^{m_n}\norm{T\ind_{E_{n,k}}}_2^2\Big)^{1/2} \le cm(E_0)^{1/2}\delta_n^{1/p-1/2}\longrightarrow0 \quad\text{as }n\to\infty.
\end{equation*}
The Enflo--Starbird criterion therefore makes $T$ non-Enflo; see also~\cite[Definition~1.7 and Theorem~1.9]{Liu1998}.
\end{proof}

\begin{proof}[Proof of Theorem~\ref{thm:non-schur}]
We first work over the reals. For the present choice $\lambda_j=1$ for every $j\in\N$, the measure $\mu_u$ is Bonami's Riesz product with parameter $r=e^{-u}$. Fix $\varepsilon>0$ and put $p_\varepsilon=1+e^{-2\varepsilon}\in(1,2)$. With $q=2$, the condition $(q-1)r^2\le p_\varepsilon-1$ becomes $e^{-2u}\le e^{-2\varepsilon}$, which holds whenever $u\ge\varepsilon$. Bonami's hypercontractivity theorem and its proof~\cite[Chapter~III, Theorem~3]{Bonami1970} therefore give
\begin{equation*}
\norm{C_uf}_2\le\norm{f}_{p_\varepsilon}\qquad(f\in L_{p_\varepsilon}(G,m),\ u\ge\varepsilon>0).
\end{equation*}
For $s>0$, define the \emph{truncated resolvent} $R_{s,\varepsilon}\colon L_1(G,m)\to L_1(G,m)$ by
\begin{equation*}
R_{s,\varepsilon}f:=\int_\varepsilon^\infty s^{-1}e^{-u/s}C_uf\,du=e^{-\varepsilon/s}C_\varepsilon R_sf,
\end{equation*}
where last equality follows from the semigroup identity. For each $f\in L_1(G,m)$, the integral is understood in the Bochner sense. 

Contractivity of $C_u$ on $L_1$ and of $R_s$ on $L_{p_\varepsilon}$ therefore yields
\begin{equation*}
\norm{R_s-R_{s,\varepsilon}}_{1\to1} \le1-e^{-\varepsilon/s},\qquad \norm{R_{s,\varepsilon}f}_2 \le e^{-\varepsilon/s}\norm{f}_{p_\varepsilon},
\end{equation*}
so that, by Lemma~\ref{lem:smoothing-non-enflo}, $R_{s,\varepsilon}$ is non-Enflo for every $s,\varepsilon>0$.

In~\eqref{eq:blocks}, write $B_j=R_{\sigma_j}$, where $\sigma_0=1$ and $\sigma_j=2^{-(j-1)}$ for $j\ge1$. Each $A_i$ is bounded on both $L_1$ and $L_2$. For fixed $i,j\in\N_0$, replacing $B_j=R_{\sigma_j}$ by its truncation $R_{\sigma_j,\varepsilon}$ therefore gives a non-Enflo approximation to $Q_{ij}=2^{(i-j)/2}A_iB_j$, with $L_1$-operator error at most $2^{(i-j)/2}\norm{A_i}_{1\to1}(1-e^{-\varepsilon/\sigma_j}) \longrightarrow0$ as $\varepsilon\to0$. The non-Enflo class is norm closed, since a lower bound on a fixed subspace persists under a smaller operator-norm perturbation. Thus every $Q_{ij}$ is non-Enflo. The property survives bounded pre- and postcomposition: a composition bounded below on an $L_1$ copy forces its middle operator to be bounded below on the image of that copy under the first map. Consequently, the extended blocks on $\E$ are non-Enflo, and Lemma~\ref{lem:block-band} makes $Q$ non-Enflo. Since $Q$ is the identity on its range, $J\mathcal{RB}$ contains no copy of real $L_1[0,1]$.

To exhibit a copy of $\ell_2$ in $\mathcal{RB}$, write $r_j(x)=x_j$ for $j\in\N$. Over either scalar field, for every finite $F\subset\N$ and $(c_j)_{j\in F}\in\K^F$, the sum $f=\sum_{j\in F}c_jr_j$ satisfies
\begin{equation*}
\norm{f}_{\mathcal{RB}}=\gamma\norm{f}_1,\qquad \gamma=1+\sum_{n=1}^\infty \frac{2^{-n/2}}{1+2^{-n}}<\infty.
\end{equation*}
By Khintchine's inequality, the closed span of $(r_j)_{j\in\N}$ in $\mathcal{RB}$ is isomorphic to $\ell_2$. The corresponding embedding sends the unit-vector sequence of $\ell_2$ to the weakly null sequence $(r_j)_{j\in\N}$, whose norms equal $\gamma$, so $\mathcal{RB}$ is non-Schur.

By~\cite[Corollary~2.3]{DeHeviaLattices}, a complemented subspace of a real $L_1$-space that is isomorphic to a Banach lattice is isomorphic to an $L_1$-space. A separable infinite-dimensional $L_1$-space either is isomorphic to $\ell_1$ or contains $L_1[0,1]$. The copy of $\ell_2$ and the non-Enflo property of $Q$ exclude these alternatives, so the real range is not isomorphic to a Banach lattice.

For complex scalars, let $Q_{\mathbb R}$ be the real operator given by the same kernels. The realification of $J\mathcal{RB}$ is real-isomorphic, by $x+iy\mapsto(x,y)$, to $\operatorname{ran}Q_{\mathbb R}\oplus_1 \operatorname{ran}Q_{\mathbb R}$. After reindexing its two copies of $\E$, the projection $Q_{\mathbb R}\oplus Q_{\mathbb R}$ has only zero blocks and the non-Enflo blocks already considered. Lemma~\ref{lem:block-band} therefore excludes real $L_1[0,1]$ from its range. In particular, the complex range contains no complex $L_1[0,1]$. This realification is complemented in a real $L_1$-space and contains real $\ell_2$, so the same structural corollary also excludes a real Banach lattice isomorphism. Finally, the realification of a complex Banach lattice is real-isomorphic to the product of two copies of its real part, a real Banach lattice. Thus the complex range cannot be isomorphic to a complex Banach lattice either.
\end{proof}

\section{An interpolation interpretation}\label{subsec:interpolation}

We give an alternative interpretation of the constructions of Sections~\ref{sec:schur} and~\ref{sec:non-schur} through real interpolation. The argument applies to any sequence $\lambda=(\lambda_j)_{j\in\N}$ of nonnegative real numbers, as in Section~\ref{sec:general-construction}. Let $-A_\lambda$ be the generator of the associated semigroup $(C_u)_{u\ge0}$ on $L_1(G,m)$, so that $R_t=(I+tA_\lambda)^{-1}$, and give the domain of $A_\lambda$, denoted by $D(A_\lambda)$, its graph norm $\norm{g}_{D(A_\lambda)}=\norm{g}_1+\norm{A_\lambda g}_1$.

\begin{proposition}\label{prop:interpolation}
As spaces of functions, with equivalent norms,
\begin{equation*}
\mathcal{X}_\lambda=\bigl(L_1(G,m),D(A_\lambda)\bigr)_{1/2,1}.
\end{equation*}
\end{proposition}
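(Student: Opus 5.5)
We will identify both norms with the discrete $K$-functional description of the real interpolation space. For the couple $(X_0,X_1)=(L_1(G,m),D(A_\lambda))$ we have $\norm{f}_{1/2,1}\simeq\sum_{n\in\mathbb Z}2^{n/2}K(2^{-n},f)$. Since $D(A_\lambda)\hookrightarrow L_1$ and $K(t,f)\le\norm{f}_1$, the terms with $n\le0$ sum to a multiple of $\norm{f}_1$. Hence
\begin{equation*}
\norm{f}_{1/2,1}\simeq\norm{f}_1+\sum_{n\ge1}2^{n/2}K(2^{-n},f).
\end{equation*}
It therefore suffices to prove the two-sided estimate $K(t,f)\simeq\norm{(I-R_t)f}_1$ for $0<t\le1$, up to an additive term $t\norm{f}_1$. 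That extra term is harmless, because $\sum_n2^{n/2}2^{-n}<\infty$.

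For the upper bound on $K$, we use the decomposition $f=(I-R_t)f+R_tf$. Since $R_t=(I+tA_\lambda)^{-1}$, we have $R_tf\in D(A_\lambda)$ and $tA_\lambda R_tf=(I-R_t)f$. This gives
\begin{equation*}
K(t,f)\le\norm{(I-R_t)f}_1+t\norm{R_tf}_1+t\norm{A_\lambda R_tf}_1\le 2\norm{(I-R_t)f}_1+t\norm{f}_1.
\end{equation*}
Summing with weights $2^{n/2}$ yields $\norm{f}_{1/2,1}\lesssim\norm{f}_{\mathcal X_\lambda}$.

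For the lower bound, take any decomposition $f=g+h$ with $h\in D(A_\lambda)$. We estimate
\begin{equation*}
\norm{(I-R_t)f}_1\le\norm{(I-R_t)g}_1+\norm{(I-R_t)h}_1\le2\norm{g}_1+\norm{R_t\,tA_\lambda h}_1\le2\norm{g}_1+t\norm{A_\lambda h}_1,
\end{equation*}
using $I-R_t=tA_\lambda R_t=tR_tA_\lambda$ on $D(A_\lambda)$ and the contractivity of $R_t$. Taking the infimum gives $\norm{(I-R_t)f}_1\le2K(t,f)$. Hence $\norm{f}_{\mathcal X_\lambda}\lesssim\norm{f}_{1/2,1}$, with constants independent of $\lambda$.

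The main point requiring care is the bookkeeping that the two spaces coincide as sets, not merely that the norms are equivalent on a dense class. Both estimates above hold for every $f\in L_1$, with either side allowed to be infinite, so finiteness of one norm forces finiteness of the other. The other facts needed are standard. The identity $tA_\lambda R_t=I-R_t$ holds on all of $L_1$, which follows from $R_t=(I+tA_\lambda)^{-1}$. The commutation of $R_t$ with $A_\lambda$ on the domain can be checked on Walsh polynomials, which form a core by Proposition~\ref{prop:resolvents} and density. Finally, the discretisation of $\int_0^\infty t^{-1/2}K(t,f)\,dt/t$ into dyadic sums is standard, using the monotonicity of $K(t,f)$ and of $K(t,f)/t$.
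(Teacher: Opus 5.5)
Your proposal is correct and follows essentially the same route as the paper's proof. Both arguments prove the two-sided bound $\tfrac12\norm{(I-R_t)f}_1\le\mathcal K(t,f)\le2\norm{(I-R_t)f}_1+t\norm{f}_1$ for $0<t\le1$, using the choice $g=R_tf$ and the estimate $\norm{(I-R_t)h}_1\le t\norm{A_\lambda h}_1$, then use $\mathcal K(t,f)=\norm{f}_1$ for $t\ge1$ and the dyadic discretisation. Your remark that the estimates hold termwise for every $f\in L_1$, so the two spaces coincide as sets, makes explicit a point the paper leaves implicit.
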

\begin{proof}
Write
\begin{equation*}
\mathcal K(t,f)=\inf_{g\in D(A_\lambda)} \bigl(\norm{f-g}_1+t\norm{g}_{D(A_\lambda)}\bigr).
\end{equation*}
For $0<t\le1$, the resolvent identities give
\begin{equation}\label{eq:k-functional}
\tfrac12\norm{(I-R_t)f}_1 \le\mathcal K(t,f) \le2\norm{(I-R_t)f}_1+t\norm{f}_1.
\end{equation}
For the upper bound, take $g=R_tf$ and use $tA_\lambda R_t=I-R_t$ and contractivity. For the lower bound, apply $I-R_t$ to $f=(f-g)+g$ and use $\norm{(I-R_t)g}_1\le t\norm{A_\lambda g}_1$. For $t\ge1$, the graph norm dominates the $L_1(G,m)$ norm, so $\mathcal K(t,f)=\norm{f}_1$. Since $\mathcal K(t,f)$ is nondecreasing and $\mathcal K(t,f)/t$ is nonincreasing, the interpolation integral is comparable with its dyadic samples:
\begin{equation*}
\int_0^\infty t^{-1/2}\mathcal K(t,f)\,\frac{dt}{t} \asymp\norm{f}_1+ \sum_{n=1}^\infty2^{n/2}\mathcal K(2^{-n},f).
\end{equation*}
Here $\asymp$ means that each side is bounded by a positive constant multiple of the other, with constants independent of $f$. Equation~\eqref{eq:k-functional} and summability of $2^{-n/2}$ identify this expression, up to fixed constants, with $\norm{f}_{\mathcal{X}_\lambda}$.
\end{proof}

In particular, this identifies $\mathcal{CS}$ and $\mathcal{RB}$ with the interpolation spaces corresponding to $\lambda_j=\sqrt{\log(j+1)}$ and $\lambda_j=1$ for $j\in\N$, respectively.

For either choice of $\lambda$, the retraction also applies on $L_p(G,m)$, $1\le p<\infty$. Let $\mathcal{X}_{p,\lambda}$ consist of the functions $f\in L_p(G,m)$ for which
\begin{equation*}
\norm{f}_{\mathcal{X}_{p,\lambda}} =\left(\norm{f}_p^p+ \sum_{n=1}^{\infty}2^{np/2}\norm{(I-S_n)f}_p^p\right)^{1/p} <\infty.
\end{equation*}
Thus $\mathcal{X}_{1,\lambda}=\mathcal{X}_\lambda$. The maps in~\eqref{eq:embedding}--\eqref{eq:retraction} complement $\mathcal{X}_{p,\lambda}$ in $\mathcal E_p=\ell_p\bigl(\N_0;L_p(G,m)\bigr)\cong L_p[0,1]$: the geometric block estimates give uniformly bounded row and column sums. The preceding $\mathcal K$-functional argument identifies $\mathcal{X}_{p,\lambda}=\bigl(L_p(G,m),D(A_{p,\lambda})\bigr)_{1/2,p}$ with equivalent norms, where $-A_{p,\lambda}$ is the generator on $L_p(G,m)$ and its domain is equipped with the graph norm.

For $1<p<\infty$, it can be checked that the choice of $\lambda$ from Section~\ref{sec:schur} gives $\mathcal{X}_{p,\lambda}\cong\ell_p$. For $\lambda_j=1$ for every $j\in\N$, the space $\mathcal{X}_{p,\lambda}$ contains a copy of $\ell_2$ and is not isomorphic to $\ell_p$ when $p\ne2$. For both choices, all the ranges with $1<p<\infty$ are reflexive and have the Radon--Nikod\'ym property.

\par\medskip\noindent\textbf{AI statement.}
The author made substantial use of OpenAI's ChatGPT 5.6 Sol and 6.0 Astra in developing this work. In particular, the main constructions arose in conversation with these models.

\par\medskip\noindent\textbf{Acknowledgements.}
The author acknowledges with thanks funding from the EPSRC (grant number EP/W524438/1) that has supported his PhD studies.

\end{document}